\theoremstyle{remark}
\newcommand{\aas}{\mathfrak{a}}
\newcommand{\bbs}{\mathfrak{b}}
\newcommand{\ccs}{\mathfrak{c}}
\newcommand{\sto}{\rightsquigarrow}
\newcommand{\EM}{\textsf{EM}}
\newcommand{\emb}{\mathrm{Emb}}
\renewcommand{\qtp}{\mathrm{qftp}}
\newcommand{\concat}{\widehat{\,\,}}
\newcommand{\FF}{\mathscr{F}}
\newcommand{\UU}{\mathbb{U}}
\newcommand{\fcn}{\mathrm{fn}}
\newcommand{\eqrk}{\textsf{eq-rk}}
\newcommand{\Fraisse}{\textrm{Fra\"iss\'e}}
\newcommand{\SX}{\textsf{X}}
\newcommand{\SF}{\textsf{F}}
\newcommand{\opD}{\textsf{opD}}
\newcommand{\MLOn}{ T_{n\mathrm{-mlo}} }
\newcommand{\Lnmlo}{ \mathscr{L}_{n\mathrm{-mlo}} }
\newcommand{\FFn}{ \mathscr{F}_{n\mathrm{-mlo}} }
\newcommand{\G}{\mathcal{G}}
\newcommand{\V}{\mathcal{V}}
\newcommand{\E}{\mathcal{E}}
\newcommand{\ms}{\vspace{.1in}}
\newcommand{\ceq}{\textrm{ceq}}
\newcommand{\leftexp}[2]{{\vphantom{#2}}^{#1}{#2}}
\newcommand{\lx}{<_{\textrm{lex}}}
\begin{document}

%%%%%%%%%%%%%%%%%%%%%%%%%%%%%%%%%%%%%%%%%%%%%
%%%% BEGIN ADDED BY VINCE - 27 MARCH 2014 %%%
%%%%%%%%%%%%%%%%%%%%%%%%%%%%%%%%%%%%%%%%%%%%%

\title{Characterizing Model-Theoretic Dividing Lines via Collapse of Generalized Indiscernibles}
\author{Vincent Guingona}
\author{Cameron Donnay Hill}
\author{Lynn Scow}

\begin{abstract}
 We use the notion of collapse of generalized indiscernible sequences to classify various model theoretic dividing lines.  In particular, we use collapse of $n$-multi-order indiscernibles to characterize op-dimension $n$; collapse of function-space indiscernibles (i.e. parameterized equivalence relations) to characterize rosy theories; and finally, convex equivalence relation indiscernibles to characterize NTP2 theories.
\end{abstract}

\maketitle

%%%%%%%%%%%%%%%%%%%%%%%%%%%%%%%%%%%%%%%%%%%%%
%%%%%%%%%%%%%%%%%%%%%%%%%%%%%%%%%%%%%%%%%%%%%
%%%%%%%%%%%%%%%%%%%%%%%%%%%%%%%%%%%%%%%%%%%%%
%%%%%%%%%%%%%%%%%%%%%%%%%%%%%%%%%%%%%%%%%%%%%
\section{Introduction}

In model theory, and in S. Shelah's classification theory in particular, one central program is the search for robust dividing lines among complete theories -- dividing lines between intelligibility and non-structure.
 For a dividing line to be sufficiently interesting, one often desires that both sides of the line have interesting mathematical content.  Moreover, if such a dividing line has multiple characterizations coming from seemingly different contexts, this lends credence to the notion that the line is indeed important.  The exemplar of this in model theory is the notion of stability, first introduced by S. Shelah \cite{ShelahBook}.  Both stable and unstable theories are inherently interesting, and stability enjoys many different characterizations, from cardinalities of Stone spaces, to coding orders, to collapse of indiscernible sequences to indiscernible sets.  Another example of such a robust dividing line is NIP; like stability, there are important structures with theories on both sides of the line, and the model theory on both sides can be very rich.

In this paper, we focus on the notion of collapsing generalized indiscernibles as a means of  characterizating/defining  various model-theoretic dividing lines that are already well-established in the literature.  It can be said that this work started with Shelah's original characterization of stable theories, Theorem II.2.13 of \cite{ShelahBook}.  There he shows that a theory is stable if and only if every indiscernible sequence is an indiscernible set (i.e., the order of the sequence ``does not matter'' or is invisible to models of the theory).  In a similar fashion, it can be shown that other well-known model-theoretic dividing lines can be characterized similarly by such  ``collapse'' statements, which we formalize here.  In order to carry this out, we must expand our definition of ``indiscernibility.''  A quintessential example of this phenomenon beyond stability is the third author's characterization of NIP theories by ordered-graph indiscernibles.  In \cite{ScowNIP}, she shows that a theory is NIP if and only if any indiscernible ``picture'' of the generic ordered graph is actually indiscernible {\em without} the graph structure (i.e., it collapses to order in the sense the the graph relation is invisible to models of the theory).

The project in this paper was also alluded to in the work of the first and second authors on op-dimension \cite{GuinHillOP}, and this will be discussed further in Section \ref{Sect_opDim} below.  Generalized indiscernibles were first introduced by Shelah (see Section VII.2 of \cite{ShelahBook}), where they were used in the context of tree-indexed indiscernibles in the hope of understanding the tree property (see, for example, Theorem III.7.11 of \cite{ShelahBook}), but propose a slightly different formulation suggested by the second author in presently-unpublished work -- see \cite{HillIndisc} -- which slightly simplifies some aspects of Section \ref{Sect_Rosiness}.  For more on tree indiscernibles in particular, see \cites{KimKim, KimKimScow}.

\medskip

\textbf{Outline of the paper}.  In Section \ref{Sect_Indisc}, we introduce all the relevant notation and notions for generalized indiscernibility, the Ramsey Property, and the Modeling Property.  As noted above, this will mostly follow \cite{HillIndisc}.  In the remaining sections, we will apply this to specific cases, exhibiting the ``collapse'' characterizations of various dividing lines.  In Section \ref{Sect_opDim}, we begin with the example of $n$-multi-order indiscernibles, first considered in \cite{GuinHillOP}, and show that collapse down to $n$ orders characterizes op-dimension $n$ (Theorem \ref{Thm_opDimIndisc} below).  In Section \ref{Sect_Rosiness}, we consider function-space indiscernibles, and show that collapse down to two indiscernible sequences characterizes rosiness (Theorem \ref{thm:collapse-result-rosiness} below).  In Section \ref{Sect_NTP2}, we consider convex equivalence relation indiscernibles, showing that NTP2 is equivalent to a dichotomy between collapsing down to an indiscernible sequence or having dividing witnessed by a particular formula (Theorem \ref{Thm_NTP2} below).

%a characterization of NTP2 by another dichotomy result involving collapse (Theorem \ref{Thm_NTP2} below).

%%%%%%%%%%%%%%%%%%%%%%%%%%%%%%%%%%%%%%%%%%%%%
%%%%% END ADDED BY VINCE - 27 MARCH 2014 %%%%
%%%%%%%%%%%%%%%%%%%%%%%%%%%%%%%%%%%%%%%%%%%%%

%%%%%%%%%%%%%%%%%%%%%%%%%%%%%%%%%%%%%%%%%%%%%
%%%%%%%%%%%%%%%%%%%%%%%%%%%%%%%%%%%%%%%%%%%%%
%%%%%%%%%%%%%%%%%%%%%%%%%%%%%%%%%%%%%%%%%%%%%
%%%%%%%%%%%%%%%%%%%%%%%%%%%%%%%%%%%%%%%%%%%%%
\section{Theories of (generalized) indiscernibles}\label{Sect_Indisc}

In this section, we review the definitions associated with structural Ramsey theory: \Fraisse-like theories, the Ramsey Property, and the Modeling Property, and theories of indiscernibles. The development (taking up subsections 2.0.1 through 2.2) is just a summary of that in \cite{HillIndisc}, which in turn is based in large part on \cite{ScowNIP}. In Subsection 2.4, we formalize the notion of ``collapse of indiscernibles'' as it will be used in this article.

%%%%%%%%%%%%%%%%%%%%%%%%%%%%%%%%%%%%%%%%%%%%%
%%%%%%%%%%%%%%%%%%%%%%%%%%%%%%%%%%%%%%%%%%%%%
\subsubsection{Notation and conventions for finite structures}

Mirroring the notation used in \cite{HillIndisc}, we adopt a somewhat eccentric streamlined notation for finite structures.
%%%%%---------------------------------------------------------------------------------------------------------------------------%%%%%
\begin{defn}[Eccentric notation for finite structures]
 In order to save ourselves from writing ``...is a finite structure'' and ``is a finite substructure of'' {\em ad nauseum}, we establish a slightly eccentric convention. With the one exception of $\ell$-embeddings (see the next subsection), upper-case calligraphic letters -- $\A$, $\B$ and so forth -- always denote {\em infinite} structures. On the other hand, lower-case gothic letters -- $\aas,\bbs,\ccs$ and so forth -- always denote {\em finite} structures. Thus, $\aas\leq\A$, for example, means that $\aas$ is a finite (induced) substructure of $\A$. 
Moreover, given a 1-sorted structure $\aas$, $\|\aas\|$ is its universe (a finite set), and $|\aas|$ is the cardinality of $\aas$.

We write $\emb(\aas,\A)$, $\emb(\aas,\bbs)$, $\emb(\A,\B)$ for the sets of all embeddings $\aas\to\A$, $\aas\to\bbs$ and $\A\to\B$, respectively. For an embedding $u$ in one of these sets, $u\aas$ (or $u\A$) is the substructure of the codomain induced on the image of $u$. 
\end{defn}

\begin{rem}[Language Convention]
Although the machinery of theories of indiscernibles accomodates countably infinite relational languages (even allowing finitely many constant symbols), in this article, the signature $\mathrm{sig}(\L_0)$ of every language of a \Fraisse-like theory $(T_0,\FF_0)$ will be finite relational.
\end{rem}

%%%%%%%%%%%%%%%%%%%%%%%%%%%%%%%%%%%%%%%%%%%%%
%%%%%%%%%%%%%%%%%%%%%%%%%%%%%%%%%%%%%%%%%%%%%
\subsection{\Fraisse-like theories}

Since our languages will never have function symbols, it is necessary to accommodate ``base theories'' that are not universal.  To do this, we define the notion of a fragment -- a surrogate for the quantifier-free formulas -- and the notion of a \Fraisse-like theory, which is analogous to the universal theory of a genuine \fraisse class.
A theory in a language with a function symbol that is known to be a theory of indiscernibles can be converted to a purely relational theory over a base theory where the base theory states that the new relation symbol behaves as the old function symbol did.

%%%%%-----------------------------------------------------------------------------------------------------%%%%%
\begin{defn}[Fragments]
Given a language $\L_0$, we define a {\em fragment} of $\L_0$ to be a subset $\FF$ of $\L_0$-formulas with the following closure properties:
\begin{itemize}
\item $\FF$ contains every quantifier-free $\L_0$-formula.
\item $\FF$ is closed under taking boolean combinations and under changes of free variables.
\item $\FF$ is closed under taking subformulas.
\end{itemize}
%Lynn: 27 August 2015 [\exists(\FF) deleted, not used; defn of trivial fragment added]
We write $\forall(\FF)$ for the set of formulas of the form $\forall \xx\,\phi$ where $\phi\in\FF$.  Abusing the notation somewhat, we also write $\forall(\FF)$ for the closure of this set under conjunctions and disjunctions.

We say that $\FF$ is the {\em trivial fragment} if it is the set of quantifier-free formulas of $\L_0$.
\end{defn}

%%%%%-----------------------------------------------------------------------------------------------------%%%%%
\begin{defn}[\Fraisse-like theories]
Given a language $\L_0$, a \emph{\Fraisse-like theory} is a pair $(T_0,\FF)$, where $\FF$ is a fragment of $\L_0$ and $T_0$ is a set of $\forall(\FF)$-sentences, satisfying the following conditions:
\begin{itemize}
\item[(JEP)] For all $\aas_0,\aas_1\models T_0$, there is a $\bbs\models T_0$ such that $\emb_\FF(\aas_i,\bbs)\neq\emptyset$ ($i<2$).
\item[(AP)] For $\aas,\bbs_0,\bbs_1\models T_0$ and $f_i\in\emb_\FF(\aas,\bbs_i)$ ($i<2$), there are $\ccs\models T_0$ and $g_i\in\emb_\FF(\bbs_i,\ccs)$ ($i<2$) such that $g_1f_1 = g_0f_0$.
\item[(LS)] There is a function $\lambda:\omega\to\omega$ such that for any $\B\models T_0$ and $X\subseteq_\fin B$, there is an $\aas\preceq_\FF\B$ such that $X\subseteq\|\aas\|$ and $|\aas|\leq \lambda(|X|)$.
\end{itemize}
\end{defn}

As with \fraisse classes and \Fraisse-limits, \Fraisse-like theories support countably infinite generic models. The following proposition summarizes this fact.

%%%%%-----------------------------------------------------------------------------------------------------%%%%%
\begin{prop}\label{Prop_comp}
Let $(T_0,\FF)$ be a \Fraisse-like theory in $\L_0$. Uniquely up to isomorphism, there is a countable $\L_0$-structure $\A$ such that:
\begin{itemize}
\item For all $\aas\models T_0$, $\emb_\FF(\aas,\A)\neq\emptyset$.
\item For any $\aas,\bbs\models T_0$ with $\aas\preceq_\FF\bbs$, for any $f_0\in\emb_\FF(\aas,\A)$, there is an $f\in \emb_\FF(\bbs,\A)$ such that $f_0\subseteq f$.
\end{itemize}
Moreover, $T_0^* :=Th(\A)$ is $\aleph_0$-categorical and model-complete relative to $\FF$. 
\end{prop}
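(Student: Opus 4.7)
\medskip
\noindent\textbf{Proof proposal for Proposition \ref{Prop_comp}.}

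The plan is to mimic the classical \Fraisse\ construction, but with $\FF$-elementary substructures replacing quantifier-free substructures and with $T_0$ replacing a universal theory. First I would build $\A$ as the union of a chain $\aas_0 \preceq_\FF \aas_1 \preceq_\FF \cdots$ of countable models of $T_0$. Using (JEP), pick any $\aas_0 \models T_0$. To ensure the extension property, fix a bookkeeping enumeration of all triples $(n, \bbs, f)$ where $\bbs \models T_0$ is chosen from a countable list of representatives (possible by (LS), since up to isomorphism there are only countably many finite $T_0$-models) and $f \in \emb_\FF(\aas_n,\bbs)$. At stage $n+1$, given $(n_k, \bbs, f)$ with $f \in \emb_\FF(\aas_{n_k},\aas_n)$ after composition, apply (AP) to $\aas_{n_k} \to \bbs$ and $\aas_{n_k} \hookrightarrow \aas_n$ to obtain $\ccs \models T_0$ with $\aas_n \preceq_\FF \ccs$ absorbing an isomorphic copy of $\bbs$. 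Then set $\aas_{n+1} = \ccs$. Setting $\A := \bigcup_n \aas_n$, each $\aas_n \preceq_\FF \A$ holds by construction and transitivity of $\FF$-elementarity.

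The first delicate point is to check $\A \models T_0$. Every sentence in $T_0$ has the form $\forall \xx\,\phi$ with $\phi \in \FF$. Given a tuple $\bar a$ from $A$, some $\aas_n$ contains $\bar a$; since $\aas_n \preceq_\FF \A$ preserves $\FF$-formulas and $\aas_n \models \forall \xx\,\phi$, we get $\phi(\bar a)$ in $\A$. Next, the uniqueness is a standard back-and-forth: given two countable $\A,\A'$ satisfying the two bulleted properties, enumerate their universes and iteratively extend a partial $\FF$-isomorphism, at each step using (LS) to pass to a finite $\aas \preceq_\FF \A$ containing a new element on one side, and the extension property of the other model (with (AP) packaged in) to extend to an $\FF$-embedding of $\aas$ on the other side.

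For $\aleph_0$-categoricity, I would apply Ryll-Nardzewski: the type of a tuple $\bar a$ in $\A$ is determined by the isomorphism type of an $\FF$-elementary closure $\aas \preceq_\FF \A$ containing $\bar a$ (exists by (LS), of size bounded by $\lambda(|\bar a|)$), since any two such tuples with isomorphic closures can be swapped by a back-and-forth extending the partial isomorphism to an automorphism of $\A$. Because $\mathrm{sig}(\L_0)$ is finite relational, there are only finitely many $T_0$-models on a universe of size $\lambda(|\bar a|)$, so only finitely many complete $n$-types. Model-completeness relative to $\FF$ follows by the same back-and-forth: if $\A_1,\A_2 \models T_0^*$ and $\bar a_1 \in \A_1$, $\bar a_2 \in \A_2$ have the same $\FF$-type, take $\FF$-elementary closures of size $\leq \lambda(|\bar a_i|)$; the $\FF$-type determines the isomorphism type of these closures, and the extension/AP structure lets one build an elementary equivalence, so every $\L_0$-formula is $T_0^*$-equivalent to an $\FF$-formula.

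I expect the main obstacle to be the first delicate point together with the correct packaging of (AP) in the bookkeeping step: because $\FF$ is only closed under boolean operations and subformulas (not under quantifiers), one must be careful that $\preceq_\FF$ transfers along the chain and through amalgamation, and that $T_0 \subseteq \forall(\FF)$ really is preserved in ascending $\FF$-elementary unions. Everything downstream—uniqueness, categoricity, and relative model-completeness—rests on this compatibility of (AP) with $\FF$-elementarity.
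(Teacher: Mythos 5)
The paper gives no proof of Proposition~\ref{Prop_comp}; it is quoted from the survey \cite{HillIndisc}, to which the paper defers for the whole development in this section. So there is no in-paper argument to compare against, but your proposal is the natural \Fraisse-style construction adapted to $\FF$-elementarity, and it is essentially correct: build an $\omega$-chain of finite models of $T_0$ under $\preceq_\FF$ using (JEP)/(AP) with bookkeeping, verify $\A \models T_0$ by a Tarski--Vaught argument for the fragment $\FF$, run a back-and-forth for uniqueness, and then use (LS) plus the finiteness of $\mathrm{sig}(\L_0)$ to bound the number of types for Ryll--Nardzewski and to get relative model-completeness.

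Two small points worth tightening. First, you state that ``every sentence in $T_0$ has the form $\forall\xx\,\phi$ with $\phi\in\FF$,'' but by the paper's convention $\forall(\FF)$ is also closed under conjunctions and disjunctions, so a $T_0$-sentence may be a disjunction $\bigvee_j \forall\xx_j\,\phi_j$. Your argument does extend: if $\A$ fails the disjunction, finitely many counterexample tuples live in some $\aas_n \preceq_\FF \A$, and $\FF$-elementarity (using that $\FF$ is closed under negation) pushes the failure down to $\aas_n \not\models T_0$, a contradiction. But this step should be made explicit rather than treated as the base case alone. Second, the countability of finite $T_0$-models up to isomorphism comes from the Language Convention that $\mathrm{sig}(\L_0)$ is finite relational, not from (LS); (LS) is what you need to realize every finite tuple inside a \emph{bounded-size} finite $\FF$-elementary substructure of $\A$, which is the load-bearing input to both the Ryll--Nardzewski count and the automorphism back-and-forth. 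Also, by the paper's typographic convention the $\aas_n$ in your chain should be read as finite (not merely countable) models of $T_0$, which is what (AP) and (LS) actually deliver; this is what keeps $\A$ countable and the bookkeeping meaningful.
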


%%%%%-----------------------------------------------------------------------------------------------------%%%%%
%Lynn: 27 August 2015 [I made a separate definition]
\begin{defn}[$\FF$-model-completion; generic models]\label{Defn_comp}
In the notation of Proposition \ref{Prop_comp}, For a \Fraisse-like theory $(T_0,\FF)$, $T_0^*$ is the {\em $\FF$-model-completion of $T_0$}. 

The structure $\A \vDash T_0^*$ is called the {\em generic model} of $(T_0,\FF)$.
\end{defn}

%%%%%-----------------------------------------------------------------------------------------------------%%%%%
%Lynn: 27 August 2015 [I made a remark]
\begin{rem}
In the case that $\FF$ is the trivial fragment, $T_0^*$ is the usual model-completion of $T_0$.
\end{rem}

%%%%%-----------------------------------------------------------------------------------------------------%%%%%
%Lynn: 16 September 2015 -- some more added to the T_\fcn bit.
Here we give a list of all \Fraisse-like theories $(T_0,\FF)$ discussed in the paper. 

\begin{defn}[Index of theories]\label{Defn_index} \
\begin{itemize}
\item $T_0$: stands for an arbitrary index theory
\item $T_n$, $0<n<\omega$: 
	\begin{itemize}
	\item Let $\L_n$ be the $n$-sorted language with sorts $\SX_0,...,\SX_{n-1}$ and for each $i<n$, a binary relation symbol $<_i$ on $\SX_i$. Let $T_n$ be the $\L_n$-theory asserting that each $<_i$ is a linear ordering of $\SX_i$. 
	\item  Let $\FF_n$ be the trivial fragment.
	\end{itemize}
\item $T_1$: this is the theory of linear order (see above)
\item $T_{n-\mathrm{mlo}}$, $0<n \leq \omega$: 
	\begin{itemize}
	\item Let $\Lnmlo = \{ <_0, ..., <_{n-1} \}$ be the $1$-sorted language with $n$ binary relations.  Let $\MLOn$ be the $\Lnmlo$-theory which asserts that $<_i$ is a linear order for each $i < n$.  
	\item Let $\FFn$ to be the trivial fragment.
	\end{itemize}
\item $T_\fcn$, $T_{\textrm{eq:n}}$, $0<n \leq \omega$: 
	\begin{itemize}
	\item Let $\L_\fcn$ be the two-sorted language with sorts $\SX$ and $\SF$, two binary relation symbols $<_\SX$ and $<_\SF$ on $\SX$ and $\SF$, respectively, and one more relation symbol $E\subseteq \SX\times\SF\times\SF$.  Define $T_\fcn$ to contain the following:
		\begin{itemize}
		\item $(\forall x\in\SX)$ ``$E_x = E(x,\cdot,\cdot)$ is an equivalence relation on $\SF$.''
		\item $(\forall u,u'\in\SF)\big(u=u'\bic (\forall x\in\SX)E(x,u,u') \big)$.
		\item ``$<_\SX$ is a linear order of $\SX$'' and ``$<_\SF$ is a linear order of $\SF$.''
		\item For all $n$, 
		$$(\forall x_0<_\SX\cdots<_\SX x_{n-1}<_\SX x_n\in \SX)(\forall u\in\SF)\textnormal{``$u/\cap_{i\leq n}E_{x_i}$ is an $<_\SF$-convex 	subset of $u/\cap_{i<n}E_{x_i}$''} $$
		\end{itemize}
	\item Let $\FF_\fcn$ be the smallest fragment of $\L_\fcn$ containing the formula $(\exists x\in X)\neg E(x,u,u')$.
	\item For each $0<n<\omega$, let $T_{\eq:n}$ be the the $\FF_\fcn$-universal theory obtained from $T_\fcn$ by adding the sentence
$$(\forall x_0,...,x_{n-1},x_n\in \SX)\bigvee_{i<n}x_n=x_i.$$
	\end{itemize}
	
%\item $T_{\textrm{tree}}$: 
%%Cameron: 31 August 2015 [I wrote in the theory and the fragment. More said in the approp. section.]
%
%	\begin{itemize}
%	\item 
%	Let $\L_\mathrm{tree}$ be the language with two binary relations $\lhd$ and $<$ and a ternary relation $Q$. 
%	
%	\item Let $\FF_\mathrm{tree}$ be the fragment arising from the quantifier-free formulas in $\L_\mathrm{tree}$ and one additional formula $\phi_\wedge(x,y,z) = $
%	$$z\lhd x\wedge z\lhd y\wedge\forall w(w\lhd x\wedge w\lhd y\cond w\trianglelefteq z)$$
%	which holds of $(x,y,z)$ just in case $z$ is the meet of $x$ and $y$.
%	
%	\item $T_\mathrm{tree}$ is the theory asserting that $\lhd$ is a tree-order (a partial-ordering such that every down-set is linearly-ordered by $\lhd$), that $<$ is a linear-ordering refining $\lhd$, and that $Q(x,y,z)$ holds iff $x,y$ are closer together than either one of them is to $z$.
%	\end{itemize}

\item $T_\ceq$: 
	\begin{itemize}
	\item Let $\L_\ceq = \{E, <\}$ be a language with two binary relations.  Define the theory of \emph{convexly-ordered equivalence relations}, $T_\ceq$, to be the theory in $\L_\ceq$ given by defining $E$ to be an equivalence relation and $<$ to be a linear ordering such that equivalence classes are convexly ordered, i.e. the following holds in any model of $T_\ceq$:
$$a<b<c \wedge E(a,c) \Rightarrow E(a,b) \wedge E(b,c)$$
	\item Let $\FF$ denote the trivial fragment.
	\end{itemize}
\end{itemize}
\end{defn}

To conclude this subsection, we introduce the notion of an indiscernible picture of a model of $T_0^*$ for a \Fraisse-like theory $(T_0,\FF)$. Of course, not all \Fraisse-like theories admit indiscernible pictures, but the definition is sensible even so.
By $\tp_\FF^\A(\aa)$ we mean the formulas in the fragment $\FF$ satisfied by $\aa$ in $\A$.
%%%%%-----------------------------------------------------------------------------------------------------%%%%%
\begin{defn}[Indiscernible pictures]
Let $(T_0,\FF)$ be a \Fraisse-like theory, and let $\A\models T_0^*$ (possibly uncountable). Let $\M$ be an infinite structure. An {\em indiscernible picture} $I:\A\to\M$ (of $\A$) is an injection $I:A\to M$ such that for all $0<n<\omega$ and $\aa,\bb\in A^n$, 
$$\tp_\FF^\A(\aa) = \tp_\FF^\A(\bb)\,\,\implies\,\,\tp^\M(I\aa)=\tp^\M(I\bb).$$
By an indiscernible $T_0$-picture in $\M$, we mean an indiscernible picture $I$ of some $\A\models T_0^*$.
\end{defn}

%%%%%%%%%%%%%%%%%%%%%%%%%%%%%%%%%%%%%%%%%%%%%
%%%%%%%%%%%%%%%%%%%%%%%%%%%%%%%%%%%%%%%%%%%%%
\subsection{Ramsey and Modeling Properties}

In this subsection, we establish the twin properties that identify generalized indiscernibles (theories of indiscernibles) among all \Fraisse-like theories. 

The first of these -- the Finitary Ramsey Property -- is precisely the hypothesis that allows the classical proof of the existence of indiscernible sequences (via Ramsey Theorem and compactness)  to go through largely unchanged.

%%%%%-----------------------------------------------------------------------------------------------------%%%%%
\begin{defn}[Finitary Ramsey Property]
Let $(T_0,\FF)$ be a \Fraisse-like theory.
\begin{itemize}
\item For $\aas\models T_0$, we say that $(T_0,\FF)$ has the $\aas$-Ramsey Property if for all $2\leq k<\omega$ and all $\bbs\models T_0$, there is a $\ccs\models T_0$ such that for any coloring $\xi:\emb(\aas,\ccs)\to k$, there is an embedding $u\in\emb(\bbs,\ccs)$ such that for all $f_0,f_1\in\emb(\aas,\bbs)$, $\xi(u\circ f_0) = \xi(u\circ f_1)$. (This condition is denoted by the Rado partition arrow: $\ccs\to(\bbs)^\aas_k$.)
\item $(T_0,\FF)$ has the Ramsey Property if it has the $\aas$-Ramsey Property for every $\aas\models T_0$.
\end{itemize}
\end{defn}

The second batch of definitions is more clearly identifiable as characterizing a ``theory of indiscernibles.'' The reader may find it helpful to recall that for any infinite structure $\M$ and any sequence $(a_i)_{i<\omega}$, there is an indiscernible sequence $(a_i')_{i<\omega}$ (in an elementary extension of $\M$) ``patterned'' on $(a_i)_{i<\omega}$. The first two definitions below (of $\ell$-embeddings and \EM-templates) formalize the idea of patterning. The Modeling Property, then, uses these notions to describe what is, naturally, a ``theory of indiscernibles.''

%%%%%-----------------------------------------------------------------------------------------------------%%%%%
\begin{defn}[$\ell$-Embeddings]
Let $(T_0,\FF)$ be a \Fraisse-like theory, and let $\A,\B\models T_0^*$. By an ``$\ell$-embedding $f:\A\sto\B$,'' we mean a family $f = \big(f_X:\A_X\to \B\big)_{X\subset_\fin A}$ of embeddings, where for each $X\subset_\fin A$, $\A_X\prec_\FF\A$ is a finite model of $T_0$ containing $X$. Implicitly, there is a uniform bounding function $s:\omega\to\omega$ such that $|\A_X|\leq s(|X|)$ for all $X\subset_\fin A$.
\end{defn}

%%%%%-----------------------------------------------------------------------------------------------------%%%%%
\begin{defn}[\EM-templates; patterning]
Let $(T_0,\FF)$ be a \Fraisse-like theory, and let $\A\models T_0^*$. In an arbitrary language $\L$, let $\M$ be some $\L$-structure. We define an {\em \EM-template} to be a family $\Gamma$ of maps $A^n\to M^n:\aa\mapsto \Gamma(\aa)$ ($0<n<\omega$) satisfying the following:
\begin{itemize}
\item For all $0<n<\omega$, $\aa\in A^n$, $\bb=\Gamma(\aa),$ and $\sigma\in \emph{Sym}(n)$,
$$\Gamma(a_{\sigma(0)},...,a_{\sigma(n-1)}) = (b_{\sigma(0)},...,b_{\sigma(n-1)}).$$
\item For all $0<m,n<\omega$, $\aa\in A^m$ and $\aa'\in A^n$, 
$$\tp^\M(\Gamma(\aa\concat\aa')) = \tp^\M(\Gamma(\aa)\concat\Gamma(\aa'))$$
\end{itemize}
Now, suppose $\M\preceq\N$, $\B\models T_0^*$, $I:\B\to\N$ is an indiscernible picture of $\B$ in $\N$. We say that {\em $I$ is patterned on $\Gamma$} if there is a family $\big(f^\Delta:\B\sto\A\big)_{\Delta\subset_\fin\L}$ of patterning maps such that for all $\Delta\subset_\fin\L$, $X\subset_\fin B$ and $\bb\in X^{<\omega}$,
$\tp_\Delta^\N(I\bb) = \tp_\Delta^\M\left( \Gamma(f_X^\Delta\bb))\right)$.
\end{defn}

%%%%%-----------------------------------------------------------------------------------------------------%%%%%
\begin{defn}[Modeling Property]
A \Fraisse-like theory $(T_0,\FF)$ with generic model $\A\models T_0^*$ is said to have the {\em Modeling Property} if for every infinite structure $\M$ and every \EM-template $\Gamma:A^{<\omega}\to M^{<\omega}$, there are $\M\preceq\N$ and an indiscernible picture $I:\A\to\N$ patterned on $\Gamma$.
\end{defn}

Finally, we can state the main characterization theorem connecting the Ramsey Property and the Modeling Property. A very-slightly weaker form of this theorem was proved in \cite{ScowNIP}, and in  \cite{HillIndisc}, it was proved as one of a somewhat longer list of equivalent conditions.

%%%%%-----------------------------------------------------------------------------------------------------%%%%%
\begin{thm}
Let $(T_0,\FF)$ be a \Fraisse-like theory. Then $(T_0,\FF)$ has the Finitary Ramsey Property if and only if it has the Modeling Property.
\end{thm}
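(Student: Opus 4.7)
For the forward direction (Finitary Ramsey $\Rightarrow$ Modeling), I would construct the indiscernible picture $I:\A\to\N$ by compactness. Introduce new constants $(c_a)_{a\in A}$ over the elementary diagram of $\M$, and form the $\L\cup\{c_a\}$-theory asserting injectivity ($c_a\neq c_b$ for $a\neq b$), indiscernibility ($\tp^\N(c_{\aa})=\tp^\N(c_{\bb})$ whenever $\tp_\FF^\A(\aa)=\tp_\FF^\A(\bb)$), and the patterning conditions (the $\Delta$-type of $c_{\bb}$ agrees with $\tp_\Delta^\M(\Gamma(f_X^\Delta\bb))$ for a chosen family $(f_X^\Delta)$). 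Any finite fragment of this theory involves only a fixed finite $\Delta\subseteq\L$ and a fixed finite substructure $\bbs\leq\A$, and its consistency reduces to producing an embedding $u:\bbs\hookrightarrow\A$ on which the coloring $\chi_\Delta:\bb\mapsto\tp_\Delta^\M(\Gamma(\bb))$ is constant on each $\FF$-orbit of sub-tuples. Iterating the Finitary Ramsey Property over the finitely many isomorphism classes $\aas\leq\bbs$ (each coloring having boundedly many $\Delta$-values) yields some $\ccs\models T_0$ satisfying $\ccs\to(\bbs)^{\aas}_{k_\aas}$ for every such $\aas$; embedding $\ccs$ into the generic $\A$ then gives the required $u\bbs$.

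For the backward direction (Modeling $\Rightarrow$ Finitary Ramsey), I argue by contradiction. Suppose the $\aas$-Ramsey Property fails, witnessed by some $\bbs\models T_0$ and $k<\omega$, so that for every $\ccs\models T_0$ there is a coloring $\xi_\ccs:\emb(\aas,\ccs)\to k$ admitting no monochromatic $\bbs$-copy. Using the compactness of the profinite space $k^{\emb(\aas,\A)}$ together with the fact that $\A$ is generic (so embeds every finite $\ccs$), I patch the $\xi_\ccs$ into a global coloring $\xi:\emb(\aas,\A)\to k$ such that $\xi\uphp\emb(\aas,v\bbs)$ is nonconstant for every $v\in\emb(\bbs,\A)$. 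Fix an enumeration $\aa^*$ of $\aas$, and let $\M$ be the structure with universe $A$ carrying the $\L_0$-structure of $\A$ together with new $|\aas|$-ary relations $R_0,\ldots,R_{k-1}$, where $R_i(\aa)$ holds iff $\aa$ realizes $\qtp^\aas(\aa^*)$ in $\A$ and $\xi(\aa)=i$. The identity map $\Gamma(\aa)=\aa$ is trivially an \EM-template.

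Applying the Modeling Property produces $\N\succeq\M$ and an indiscernible picture $I:\A\to\N$ patterned on $\Gamma$. Pick any $v\in\emb(\bbs,\A)$; patterning on the quantifier-free $\L_0$-formulas forces $I(v\bbs)$ to be an $\L_0$-embedded copy of $\bbs$ in $\N$, while indiscernibility of $I$ implies that for every pair $w_0,w_1\in\emb(\aas,\bbs)$ the tuples $I(v w_0\aa^*)$ and $I(v w_1\aa^*)$ have equal $\L$-type in $\N$ and therefore satisfy the same $R_i$. Thus $I(v\bbs)$ witnesses in $\N$ the first-order $\L$-sentence ``there is an $\L_0$-embedded copy of $\bbs$ whose $\aas$-subcopies are all monochromatic under some $R_i$.'' By elementary equivalence the same sentence holds in $\M$, yielding an embedding $v':\bbs\to\A$ with $\xi\uphp\emb(\aas,v'\bbs)$ constant, contradicting the construction of $\xi$.

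The main obstacle is the forward direction: one must coordinate the choice of patterning maps $(f_X^\Delta)$ across finite fragments $\Delta$ and orchestrate the iterated Ramsey step so that it simultaneously addresses all $\FF$-types occurring as sub-tuples of $\bbs$. The backward direction is comparatively mechanical once the global bad coloring and the auxiliary structure $\M$ are set up; the only subtlety is verifying that ``being an embedded copy of $\bbs$'' and ``monochromaticity on $\aas$-subcopies'' are expressible as a single first-order $\L$-sentence, which follows from finiteness of $\bbs$ and of $\emb(\aas,\bbs)$.
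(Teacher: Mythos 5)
The paper does not prove this theorem; it cites \cite{ScowNIP} (for a slightly weaker form) and \cite{HillIndisc}, so there is no in-text proof to compare against. Your two-sided strategy---compactness plus iterated Ramsey for the forward direction, and a globally patched bad coloring encoded by fresh relation symbols $R_0,\dots,R_{k-1}$ for the backward direction---is the standard architecture, and the patching-by-Tychonoff step and the elementarity transfer at the end of the backward direction are set up correctly.

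There is, however, a genuine gap that you gloss over in the backward direction (and which also lurks in the forward direction): the mismatch between plain embeddings, as used in the Finitary Ramsey Property, and $\FF$-types, as used in the definition of indiscernible picture. You conclude that $I(vw_0\aa^*)$ and $I(vw_1\aa^*)$ have equal $\L$-type in $\N$ ``by indiscernibility of $I$,'' but indiscernibility promises this only under the hypothesis $\tp_\FF^\A(vw_0\aa^*) = \tp_\FF^\A(vw_1\aa^*)$. Since $vw_0, vw_1$ are merely elements of $\emb(\aas,\A)$ and not a priori $\FF$-embeddings, the two tuples agree on quantifier-free $\L_0$-type but not necessarily on $\FF$-type: when $\FF$ properly extends the quantifier-free formulas (as with $\FF_\fcn$), the $\FF$-type of a tuple enumerating a copy of $\aas$ depends on how that copy sits inside $\A$, not only on its abstract isomorphism type. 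Your argument tacitly treats $\FF$ as trivial. To close this you would need either to recast the whole backward direction in terms of $\emb_\FF$ (patch $\xi$ on $\emb_\FF(\aas,\A)$, restrict the sentence extracted from $\N$ to monochromaticity of $\FF$-subcopies, and verify this still contradicts the failure of the stated Ramsey Property), or to prove as a lemma that the $\FF$-type of any embedded copy of $\aas$ in the generic model is determined by its quantifier-free $\L_0$-type. A related caveat in the forward direction, which you flag but do not resolve, is that the theory you write down presupposes a ``chosen family $(f_X^\Delta)$'' before the compactness argument has produced one; the usual fix is a two-stage compactness, first over consistent assignments $q \mapsto p_{\Delta,q}$ of $\Delta$-types to $\FF$-types of tuples, then over models realizing them, with the patterning maps extracted from the Ramsey-produced embeddings rather than chosen in advance.
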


%%%%%-----------------------------------------------------------------------------------------------------%%%%%
\begin{defn}[Theory of indiscernibles]
A {\em theory of indiscernibles} is precisely a \Fraisse-like theory $(T_0,\FF)$ that has the Modeling Property (equivalently, the Finitary Ramsey Property).
\end{defn}

%%%%%%%%%%%%%%%%%%%%%%%%%%%%%%%%%%%%%%%%%%%%%
%%%%%%%%%%%%%%%%%%%%%%%%%%%%%%%%%%%%%%%%%%%%%
\subsection{Examples}

We hope that the following examples will be helpful to the reader in understanding the definitions that we have just laid down.

%%%%%---------------------------------------------------------------------------------------------%%%%%
\begin{example}\label{example_mutuallyindiscernible}
% Lynn excised the theory : 27 August 2015 
$(T_n,\FF_n)$ is a \Fraisse-like theory, and $T_n^*$ is the theory asserting just that each $(\SX_i,<_i)$ is a dense linear order without endpoints. If $\A\models T_n^*$ is countable, then $Aut(\A)=Aut(\QQ,<)^n$, which is extremely amenable.
So by Lemma 6.7 of \cite{KPT} we can conclude that $(T_n,\FF_n)$ is a theory of indiscernibles.

Now, consider an indiscernible picture $I:\A\to\M$, where $\M$ is some infinite $\L$-structure. We note that for $i<j<n$ and $a\in \SX_i^\A$, $b\in \SX_i^\A$, $I(a)$ and $I(b)$ may be of different sorts. However, each sequence $I_j=(a_t)_{t\in\SX_i^\A}$ ($j<n$) is an indiscernible sequence in the usual sense, and moreover, the family $(I_0,...,I_{n-1})$ comprises mutually indiscernible sequences in the sense that for each $j<n$, $I_j$ is indiscernible over $\bigcup_{k<n:k\neq j}I_k$.
\end{example}

% Lynn moved this here : 27 August 2015 
\begin{example}[$n$-multi-order]\label{Defn_nmultiorder}
$(\MLOn, \FFn)$ is a \Fraisse-like theory, and $\MLOn^*$ is the theory where each $<_i$ is a dense linear order and these orders are independent.  Then $\MLOn^*$ is $\aleph_0$-categorical and admits elimination of quantifiers.  Moreover, $\MLOn$ is a theory of indiscernibles.  For more details, see Definition 1.2 of \cite{GuinHillOP}, where this is called $\mathrm{MLO}_n^*$.
Also see Corollary 1.4 of \cite{Bodirsky} and Theorem 4 of \cite{Sokic}.
\end{example}

% Lynn  added explanation: 16 September 2015
\begin{example}[function spaces]
$T_\fcn$ is a $\FF_\fcn$-universal theory. It is easy to see that $(T_\fcn,\L_\fcn)$ is a \Fraisse-like theory. In fact, the model companion shares an automorphism group with that of the Shelah tree $I_s = (\leftexp{\omega>}{\omega},\lhd,\wedge,\lx,\{P_n\}_n)$ from \cite{KimKimScow} where $\wedge$ is the meet in the partial tree order $\lhd$, $\lx$ is the lexicographical order on sequences, and $P_n$ is a unary predicate picking out  the $n$-th level of the tree.  Identify leaves of $\leftexp{n \geq}{\omega}$ with the one-point classes $[x] / \bigcap_{1 \leq 1 \leq n} E_{x_i}$.
\end{example}

% Lynn moved this here : 27 August 2015 
\begin{example}
$(T_\ceq, \FF)$ is a \Fraisse-like theory.  A countable model of the model companion $T_\ceq^*$ is a collection of densely ordered convex equivalence classes, each of which is a dense linear order.  
\end{example}

%%%%%%%%%%%%%%%%%%%%%%%%%%%%%%%%%%%%%%%%%%%%%
%%%%%%%%%%%%%%%%%%%%%%%%%%%%%%%%%%%%%%%%%%%%%
\subsection{Collapse}

It probably is not surprising (given an understanding of stable theories) that for a given theory $T$, not every indiscernible picture of a model of $T_0^*$ ``survives'' -- sometimes, the indiscernible picture is already indiscernible relative to a reduct of $(T_0,\FF)$. This phenomenon is the basis for our approach to dividing lines in this article, so obviously, we must formalize exactly what we mean.

%%%%%---------------------------------------------------------------------------------------------%%%%%
\begin{defn}[Reducts of \Fraisse-like theories]
Let $(T_0,\FF_0)$ and $(T_0^+,\FF_0^+)$ be \Fraisse-like theories in $\L_0$ and $\L_0^+$, respectively, and let $\A_0\models T_0^*$ and $\A_0^+\models (T_0^+)^*$ be their respective generic models.
\begin{itemize}
\item As usual, we say that $\A_0$ is a reduct of $\A_0^+$, if there is a bijection $w:A_0\to A_0^+$ such that for every formula $\phi(x_0,...,x_{n-1})$ of $\L_0$, there is a formula $\psi(x_0,...,x_{n-1})$ of $\L_0^+$ such that 
$$\A_0\models \phi(\aa)\iff \A_0^+\models\psi(w\aa)$$
for all $\aa\in A_0^n$. As all of the theories in question are $\aleph_0$-categorical, this is equivalent to asserting that, up to the identification provided by $w$, every complete type of $T_0^*$ over $\emptyset$ is equal to a (finite) union of complete types of $(T_0^+)^*$ over $\emptyset$.
We will say that $(T_0,\FF_0)$ is a reduct of $(T_0^+,\FF_0^+)$ if the generic model of the former is a reduct of the generic model of the latter.
\end{itemize}
\end{defn}

%%%%%---------------------------------------------------------------------------------------------%%%%%
\begin{defn}[Reducts of \Fraisse-like theories]
Let $(T_0,\FF_0)$ and $(T_0^+,\FF_0^+)$ be theories of indiscernibles, and let $T$ be an arbitrary complete theory. Assume $(T_0,\FF_0)$ is a reduct of $(T_0^+,\FF_0^+)$ -- so fix $w:A_0\to A_0^+$ witnessing this, where $\A_0$ and $\A_0^+$ are the generic models of the two theories, respectively. 
\begin{itemize} 
\item  Let $I:\A_0^+\to \M$ be an indiscernible picture of  $\A_0^+\models (T_0^+)^*$ in a model $\M$ of $T$. We say that {\em $I$ collapses to $(T_0,\FF_0)$} if the composite $I\circ w:\A_0\to\M$ is an indiscernible picture of $\A_0$ in $\M$.
\item We say that {\em $T$ collapses $(T_0^+,\FF_0^+)$ to $(T_0,\FF_0)$} if every indiscernible picture $I:\A_0^+\to \M$ of $\A_0^+\models (T_0^+)^*$ in a model $\M$ of $T$ collapse to $(T_0,\FF_0)$.

\end{itemize}
\end{defn}

%%%%%---------------------------------------------------------------------------------------------%%%%%
\begin{rem}
We note that our formalization of collapse requires that a specific reduct is isolated before hand. Although it is also possible (presumably) to speak of ``collapse'' without requiring that the remainder is anything in particular, this is {\em not} the approach we adopt here.
\end{rem}

%%%%%%%%%%%%%%%%%%%%%%%%%%%%%%%%%%%%%%%%%%%%%
%%%%%%%%%%%%%%%%%%%%%%%%%%%%%%%%%%%%%%%%%%%%% ADDED BY VINCE : 15 April 2014 %%
\subsection{Dividing across}

Although we can get similar collapse results for op-dimension and rosiness, for NTP2, we will need to consider a slightly different notion.  This is primarily because of the fact that, while stability, NIP, op-dimension, and rosiness are characterized by the non-existence of a pattern with only existential criteria, NTP2 (and likewise, NTP1 for that matter) is characterized by the non-existence of a pattern with both existential and universal requirements.  So we will need a notion of dividing across a fixed sequence $I$ to deal with the ``universal part'' of the pattern.

%%%%%-----------------------------------------------------------------------------------------------------%%%%%
\begin{defn}[Dividing across $I$]\label{Defn_DivideAcross}
Let $I:\A\to\M^y$ be an indiscernible picture of $\A\models T_0^*$, and let $\phi(x;y)$ be an $\L$-formula. We say that $\phi(x;y)$ {\em divides across $I$} if there is an indiscernible sequence $\langle a_i \rangle_{i < \omega}$ in $\A$ such that:
\[
 \left\{\phi(x;Ia_i)\right\}_{i < \omega} \text{ is inconsistent.}
\]
\end{defn}

Notice that saying $\phi(x;y)$ divides across $I$, witnessed by $\langle a_i \rangle_{i < \omega}$, shows that $\varphi(x; Ia_0)$ divides over $\emptyset$, since $\langle Ia_i \rangle_{i < \omega}$ is an indiscernible sequence in $\M$.  As usual, by compactness, this is equivalent to demanding that $\left\{\phi(x;Ia_i)\right\}_{i < \omega}$ is $k$-inconsistent for some $k < \omega$.

We will use this notion in Section \ref{Sect_NTP2} below.  We will introduce a specialized versions of dividing, namely ``dividing vertically.''
To connect diving to indiscernible collapse,
we will also have a notion of \emph{coding} formulas.  Though this definition depends on the specific index theory $T_0$, the basic idea is to have a single $\L$-formula witnessing the collapse of a particular relation symbol in $\L_0$ for a indiscernible picture $I$.

%%%%%%%%%%%%%%%%%%%%%%%%%%%%%%%%%%%%%%%%%%%%%
%%%%%%%%%%%%%%%%%%%%%%%%%%%%%%%%%%%%%%%%%%%%%
%%%%%%%%%%%%%%%%%%%%%%%%%%%%%%%%%%%%%%%%%%%%%
%%%%%%%%%%%%%%%%%%%%%%%%%%%%%%%%%%%%%%%%%%%%%
\section{$op$-Dimension and indiscernible multi-orders}\label{Sect_opDim}

%%%%%%%%%%%%%%%%%%%%%%%%%%%%%%%%%%%%%%%%%%%%%
%%%% BEGIN ADDED BY VINCE - 27 MARCH 2014 %%%
%%%%%%%%%%%%%%%%%%%%%%%%%%%%%%%%%%%%%%%%%%%%%

Let $T$ be a complete first-order $\L$-theory and let $\UU$ be a monster model for $T$.

%%%%%%%%%%%%%%%%%%%%%%%%%%%%%%%%%%%%%%%%%%%%%
%%%%%%%%%%%%%%%%%%%%%%%%%%%%%%%%%%%%%%%%%%%%%
\subsection{Background on op-dimension}

There are many equivalent definitions for op-dimension given in \cite{GuinHillOP}.  The main one of interest here will be the following.

\begin{defn}[op-dimension via IRD-patterns]
 We say that a partial type $\pi(x)$ has \emph{op-dimension} $\ge n$ if there exists formulas $\psi_i(x;y_i)$ and sequences $\langle b_{i,j} : j < \omega \rangle$ for $i < n$ such that, for all functions $f : n \rightarrow \omega$, the following partial type is consistent
 \[
  \pi(x) \cup \{ \psi_i(x; b_{i,j}) : j < f(i), i < n \} \cup \{ \neg \psi_i(x; b_{i,j}) : j \ge f(i), i < n \}.
 \]
 We will call such $\langle \psi_i, \langle b_{i,j} : j < \omega \rangle : i < n \rangle$ an \emph{IRD-pattern} of length $n$ and depth $\omega$ in $\pi(x)$.  We will also denote this by $\opD(\pi) \ge n$.  We will write $\opD(a/B)$ for $\opD(\tp(a/B))$.  If the op-dimension of $\pi$ is $\ge n$ for all $n$, we say it is infinite and write $\opD(\pi) = \infty$.
\end{defn}

Notice that by the modeling property in Example \ref{example_mutuallyindiscernible}, we may assume that the $b_{i,j}$ in the definition of op-dimension are such that $\{ \langle b_{i,j} : j \in \mathbb{Q} \rangle : i < n \}$ is mutually indiscernible (and indexed by $\mathbb{Q}$ or, in fact, any infinite order $J$).

Notice that if $x=x$ has finite op-dimension, then in particular $T$ has NIP in the sort of $x$.  The converse does not hold.  Consider $T_{\omega-\mathrm{mlo}}^*$ (see Definition \ref{Defn_index}).  A partial type $\pi$ is stable if and only if $\opD(\pi) = 0$.  It is easy to check that the op-dimension of a partial type $\pi$ is bounded above by the dp-rank of $\pi$ (any IRD-pattern can be converted to an ICT-pattern).  In general, however, op-dimension can be much smaller (e.g., there are stable, non-strongly dependent theories).  On the other hand, op-dimension enjoys many of the same properties as dp-rank, including subadditivity (see Theorem 2.2 of \cite{GuinHillOP}).  That is,
\[
 \opD(ab/A) \le \opD(a/A) + \opD(b/Aa).
\]
If $T$ is o-minimal (or simply distal), then dp-rank and op-dimension coincide (Theorem 3.1 of \cite{GuinHillOP}).

\begin{thm}[Theorem 1.21 and Proposition 2.1 of \cite{GuinHillOP}]
 For a partial type $\pi(x)$ over a set $B$, the following are equivalent
 \begin{enumerate}
  \item $\opD(\pi) \le n$,
	\item For all formulas $\varphi(x;y)$, all sequences $\langle b_i : i \in \mathbb{Q} \rangle$ indiscernible over $B$, and for all $a \models \pi$, there exists a partition of $\mathbb{Q}$ into convex sets $C_0 < ... < C_n$ such that, for all $\ell \le n$, the set $\{ i \in C_\ell : \models \varphi(a; b_i) \}$ is either finite or cofinite in $C_\ell$.
	\item For all $\{ \langle b_{i,j} : j \in J \rangle : i \le n \}$ almost mutually indiscernible over $B$ and all $a \models \pi$, there exists $i \le n$ such that $\langle b_{i,j} : j \in J \rangle$ is almost indiscernible over $Ba$.
 \end{enumerate}
\end{thm}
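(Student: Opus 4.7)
The plan is to prove (1) $\Leftrightarrow$ (2) and (1) $\Leftrightarrow$ (3) separately, each via contrapositive construction or extraction of an IRD-pattern of length $n+1$.

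For (2) $\Rightarrow$ (1), suppose $\opD(\pi) \ge n+1$ and fix an IRD-pattern $\langle \psi_i, \langle b_{i,j} : j < \omega\rangle : i \le n\rangle$ in $\pi$. Following the remark after the definition of op-dim and Example \ref{example_mutuallyindiscernible}, I re-index by $\mathbb{Q}$ and arrange the rows $\langle b_{i,j} : j \in \mathbb{Q}\rangle_{i \le n}$ to be mutually indiscernible over $B$. Form the $B$-indiscernible sequence $c_j := b_{0,j}\concat \cdots \concat b_{n,j}$, pick $r_0 < r_1 < \cdots < r_n$ in $\mathbb{Q}$, and by IRD-consistency choose $a \models \pi$ with $\models \psi_i(a; b_{i,j})$ iff $j < r_i$. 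Then $\varphi(x; y_0,\ldots,y_n) := \bigoplus_{i \le n} \psi_i(x; y_i)$ has the property that $\varphi(a; c_j)$ toggles truth value at each $r_i$, partitioning $\mathbb{Q}$ into $n+2$ adjacent convex blocks of constant truth value. By pigeonhole, any convex partition $C_0 < \cdots < C_n$ into $n+1$ pieces contains some $r_i$ strictly inside some $C_\ell$, making $\{j \in C_\ell : \models \varphi(a; c_j)\}$ both infinite and coinfinite in $C_\ell$, so (2) fails.

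For (1) $\Rightarrow$ (2), I first note that $\opD(\pi) < \infty$ implies NIP of $\pi$: a formula $\psi(x;y)$ with IP in $\pi$ can be used to build IRD-patterns of arbitrary length by splitting a Ramsey-extracted shattering indiscernible sequence into disjoint convex sub-sequences (automatically mutually indiscernible over $B$) and invoking IP to realize any cut pattern. NIP then forces the set $S := \{j : \models \varphi(a;b_j)\}$ to have only finitely many alternations on any $B$-indiscernible $\langle b_j : j \in \mathbb{Q}\rangle$; let $S$ have $M+1$ maximal convex pieces of constant truth value. If $M \le n$, the required partition is obtained directly from these pieces. If $M \ge n+1$, I thin around each of the first $n+1$ alternation points to disjoint convex sub-sequences $\langle b^{(i)}_j : j \in \mathbb{Q}\rangle$ of $\langle b_j\rangle$ (e.g., upper half of one block concatenated with lower half of the next), each $B$-indiscernible, with $\varphi$ (possibly negated) exhibiting a cut. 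Being disjoint convex sub-sequences of a single $B$-indiscernible sequence, they are mutually indiscernible over $B$, and compactness upgrades the single witness $a$ to full IRD-consistency, giving an IRD-pattern of length $n+1$ and contradicting $\opD(\pi) \le n$.

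For (1) $\Leftrightarrow$ (3), both directions proceed by contrapositive through IRD-patterns. If $\opD(\pi) \ge n+1$, an IRD-pattern of length $n+1$ with mutually indiscernible rows immediately violates (3): each row $\langle b_{i,j}\rangle$ fails to be almost indiscernible over $Ba$ because $\psi_i(a; b_{i,j})$ already records a nontrivial cut in $j$. Conversely, if (3) fails with almost-mutually-indiscernible witnesses $\{\langle b_{i,j} : j \in J\rangle : i \le n\}$ and $a \models \pi$ such that no row is almost indiscernible over $Ba$, I extract for each $i$ a witnessing formula $\psi_i(x; y_i, \bar z_i)$ with $\bar z_i$ from $B$ that exhibits a cut in $\langle b_{i,j}\rangle$, and then thin the rows to pass from almost-mutual to strict mutual indiscernibility over $B$ and from the non-\emph{almost}-indiscernibility witnesses to genuine $\psi_i$-cuts; this yields an IRD-pattern of length $n+1$ in $\pi$, again contradicting $\opD(\pi) \le n$. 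The main obstacle I anticipate is the bookkeeping around the \emph{almost} qualifier in (3): extracting genuine mutual indiscernibility and genuine IRD cuts from \emph{almost} hypotheses while retaining all $n+1$ witnessing rows, since losing even one row to the thinning would drop the pattern length below $n+1$. A minor concern is the off-by-one in (1) $\Rightarrow$ (2) and (2) $\Rightarrow$ (1), making sure that $n+1$ cut points give an IRD-pattern of length exactly $n+1$ (with $n+2$ blocks) and thus the desired contradiction.
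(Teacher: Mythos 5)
Note that the paper only states this result as a citation from \cite{GuinHillOP} and does not supply a proof, so there is no in-paper argument to compare against; judging your attempt on its own, there are a couple of genuine gaps. In $(1)\Rightarrow(2)$, counting ``$M+1$ maximal convex pieces of constant truth value'' and concluding $\opD(\pi)\ge n+1$ once $M\ge n+1$ is unsound: condition (2) tolerates finite (even singleton) alternation blocks since it asks only for ``finite or cofinite,'' and such blocks contribute nothing to op-dimension. Over $\mathbb{Q}$ a $B$-indiscernible sequence may have truth pattern $T^{\infty}FT^{\infty}$ with $M=2$ naive alternations, yet one cannot place an IRD cut adjacent to a finite block because both sides of a cut must be infinite; the correct count is of alternations between consecutive \emph{infinite} blocks, with finite blocks absorbed. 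Your plan to ``thin around each of the first $n+1$ alternation points'' silently assumes every block is infinite. Relatedly, ``compactness upgrades the single witness $a$ to full IRD-consistency'' is not right as stated: a single $a$ fixes one cut position in each row, and to realize arbitrary cut patterns $f$ you need the fact that mutual $B$-indiscernibility of the rows (as disjoint convex subsequences of a $B$-indiscernible sequence) allows you to shift the rows independently by a $B$-automorphism, carrying $a$ to a new realization of $\pi$ (harmless since $\pi$ is over $B$) -- compactness alone does not do this.

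In $\neg(1)\Rightarrow\neg(3)$ there is a further issue: with a depth-$\omega$ IRD-pattern indexed by $\omega$, a single realization $a$ places the cut in row $i$ at some finite $f(i)$, after which $\psi_i(a;b_{i,j})$ is constant -- so depending on the exact meaning of ``almost indiscernible over $Ba$'' in \cite{GuinHillOP}, the row may in fact \emph{be} almost indiscernible over $Ba$ (discard the finite initial segment) and (3) would not visibly fail for this $a$; you need to first stretch the index order to $\mathbb{Z}$ or $\mathbb{Q}$ and place each cut so both sides are infinite, and then you must account for all formulas, not just the $\psi_i$. The $(2)\Rightarrow(1)$ step (the XOR formula together with pigeonhole on $n$ partition boundaries versus $n+1$ cut points) is essentially correct, modulo the standard verification that the diagonal of a mutually $B$-indiscernible array, once the rows are re-indexed to share a single copy of $\mathbb{Q}$, is itself $B$-indiscernible.
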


\subsection{$n$-Multi-order characterization of op-dimension}

\begin{prop}[Proposition 1.18 of \cite{GuinHillOP}]\label{Prop_118GuinHillOP}
 For a partial type $\pi(x)$ and $n < \omega$, the following are equivalent:
 \begin{enumerate}
  \item $\opD(\pi) \ge n$,
	\item There exists an indiscernible $\MLOn$-picture $I: \A \to \pi(\UU)$ and a formula $\varphi(x;y)$ such that, for all $i < n$ and for all $X \subseteq A$ a cut of $(A; <_i)$, there exists $b \in \UU$ such that $X = \{ a \in \A : \models \varphi(Ia; b) \}$.
 \end{enumerate}
\end{prop}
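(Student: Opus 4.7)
My plan starts with the easier direction $(2) \Rightarrow (1)$, which uses the cut-realization hypothesis to build an IRD-pattern directly. For each $i < n$, I would choose a decreasing sequence $a_{i,0} >_i a_{i,1} >_i \cdots$ in $A$ and invoke the hypothesis on the downward cut $X_{i,j} := \{ a \in A : a <_i a_{i,j} \}$ to obtain $b_{i,j} \in \UU$ such that $\varphi(Ia; b_{i,j})$ holds iff $a <_i a_{i,j}$. Setting $\psi_i := \varphi$ for every $i < n$, I claim $(\psi_i, (b_{i,j})_j)_{i<n}$ is an IRD-pattern in $\pi$: given $f : n \to \omega$, it suffices to find $a \in A$ with $a_{i, f(i)} \leq_i a <_i a_{i, f(i)-1}$ for each $i$ (the upper bound vacuous when $f(i) = 0$), for then $Ia \in \pi(\UU)$ and $\varphi(Ia; b_{i,j})$ holds exactly when $j < f(i)$. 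Such an $a$ exists because the $n$ orders in the generic model of $\MLOn^*$ are dense and mutually independent, so the intersection of the prescribed intervals (one per order) is nonempty.

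For the harder direction $(1) \Rightarrow (2)$, starting from an IRD-pattern $(\psi_i, (b_{i,j})_{j<\omega})_{i<n}$ in $\pi$, I would first apply the modeling property for mutually indiscernible sequences (Example \ref{example_mutuallyindiscernible}) to assume the $n$ sequences $(b_{i,j})_{j<\omega}$ are mutually indiscernible, then extend densely to $(b_{i,j})_{j \in \mathbb{Q}}$ by compactness; IRD-consistency transfers. For each $\eta \in \mathbb{Q}^n$, I would choose $a_\eta \models \pi$ realizing
\[
  \{\psi_i(x; b_{i,j}) : j < \eta_i,\ i < n\} \cup \{\neg\psi_i(x; b_{i,j}) : j > \eta_i,\ i < n\},
\]
whose consistency follows from the extended IRD property.

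Next I would apply the modeling property for $\MLOn$ (Example \ref{Defn_nmultiorder}) to the family $(a_\eta)_{\eta \in \mathbb{Q}^n}$, viewed as indexed by an $\MLOn$-structure under coordinate-wise orders (after restricting to a dense subset making each $<_i$ linear), to extract an indiscernible $\MLOn$-picture $I : \A \to \UU$ patterned on $(a_\eta)$; each $I(a)$ lies in $\pi(\UU)$ by transfer. To compress the $n$ formulas $\psi_i$ into a single $\varphi$, I would set
\[
  \varphi(x; y_0, \ldots, y_{n-1}, z) := \bigvee_{i<n} \bigl( z = y_i \wedge \psi_i(x; y_i) \bigr),
\]
using $z$ as a selector variable. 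For a cut $X$ of $(A; <_i)$, saturation of $\UU$ yields $b' \in \UU$ sitting at the corresponding Dedekind position in the indiscernible sequence $(b_{i,j})_{j \in \mathbb{Q}}$; choosing $y_i := b'$, $z := b'$, and the remaining $y_j$ to be dummy values distinct from $b'$ produces a parameter that realizes $X$ via $\varphi$.

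The main obstacle lies in the three-fold interlocking in direction $(1) \Rightarrow (2)$: the mutual-indiscernibility extraction, the dense re-indexing to $\mathbb{Q}$, and the $\MLOn$-patterning step must each preserve IRD-consistency and the intended first-order behavior of each $\psi_i$. The formula compression itself is routine, but verifying cut-realization for every cut---including Dedekind cuts not corresponding to rational positions---is the delicate point, requiring saturation of $\UU$ together with indiscernibility to locate appropriate parameters at cut positions.
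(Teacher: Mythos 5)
Your direction $(2)\Rightarrow(1)$ is correct: choosing a $<_i$-decreasing sequence $a_{i,0}>_i a_{i,1}>_i\cdots$, pulling parameters $b_{i,j}$ from the downward cuts, and then intersecting the $n$ half-open intervals (nonempty because the orders in a model of $\MLOn^*$ are dense and independent) yields an IRD-pattern of any finite depth, hence $\opD(\pi)\ge n$ by compactness.

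The direction $(1)\Rightarrow(2)$ has a genuine gap at the very step you flag as delicate. After you apply the Modeling Property to the family $(a_\eta)_{\eta}$ to obtain an indiscernible $\MLOn$-picture $I:\A\to\pi(\UU)$, the new elements $I(a)$ live in an elementary extension and are only controlled through their $\emptyset$-types relative to the template: patterning guarantees $\tp_\Delta(I\bb)=\tp_\Delta(\Gamma(f\bb))$ for finite $\Delta\subseteq\L$, with no parameters. In particular, the truth value of $\psi_i\bigl(I(a);b_{i,j}\bigr)$, where the $b_{i,j}$ are the fixed parameters of the original IRD-pattern, is \emph{not} determined by patterning. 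So when you ``use saturation to place $b'$ at the Dedekind position in $(b_{i,j})_{j\in\mathbb{Q}}$,'' that $b'$ has no definable relationship to the $I(a)$'s, and $\{a\in A:\models\psi_i(Ia;b')\}$ need not be a cut of $(A;<_i)$, let alone the one you want.

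The fix is to carry the cut witnesses along inside the picture rather than referencing them externally: pattern the tuples $\Gamma(\eta)=\bigl(a_\eta,b_{0,\eta_0},\dots,b_{n-1,\eta_{n-1}}\bigr)$ rather than the bare $a_\eta$. Then the resulting indiscernible picture $I'(a)=\bigl(I(a),\beta_0(a),\dots,\beta_{n-1}(a)\bigr)$ satisfies, by patterning on the \emph{tuple}, that $\models\psi_i\bigl(I(a');\beta_i(a)\bigr)$ if and only if $a'<_i a$ --- this is a quantifier-free $\MLOn$-invariant of the pair $(a',a)$, hence preserved. From there, an arbitrary cut $X$ of $(A;<_i)$ is realized by a $b\in\UU$ obtained from saturation applied to the partial type $\{\psi_i(I(a');y):a'\in X\}\cup\{\neg\psi_i(I(a');y):a'\notin X\}$, which is finitely satisfiable by taking $y=\beta_i(a)$ for $a$ in the appropriate gap. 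Your selector-formula $\varphi$ then compresses the $n$ formulas $\psi_i$ into one as you intend; but without tupling in the $\beta_i$'s, the cut-realization step does not go through.
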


Along similar lines, we can use collapse of indiscernible $\MLOn$-pictures to characterize op-dimension, supposing the ambient theory has NIP.  The following is the main result of this section.

\begin{thm}[Characterization of op-dimension by collapsing indiscernibles]\label{Thm_opDimIndisc}
 Suppose $T$ has NIP.  For a partial type $\pi(x)$ and $n < \omega$, the following are equivalent:
 \begin{enumerate}
  \item $\opD(\pi) < n$,
	\item For all indiscernible $\MLOn$-pictures $I: \A \to \pi(\UU)$, there exists $j < n$ such that $I$ is an indiscernible picture for $\FFn^{-j} := \{ <_i : i \neq j, i < n \}$
 \end{enumerate}
\end{thm}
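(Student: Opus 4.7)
The plan is to prove both implications by contrapositive, with Proposition \ref{Prop_118GuinHillOP} as the bridge between $\opD(\pi)$ and the existence of an indiscernible $\MLOn$-picture carrying a single cut-defining formula.

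For (2) $\Rightarrow$ (1), suppose $\opD(\pi) \ge n$. By Proposition \ref{Prop_118GuinHillOP} there is an indiscernible $\MLOn$-picture $I : \A \to \pi(\UU)$ together with a formula $\varphi(x;y)$ whose $b$-fibres realize every cut of every $<_i$. Fix $j < n$. In the generic model $\A$ of $\MLOn^*$, choose triples $\bar a = (a_0, a_1, a_2)$ and $\bar a' = (a_0', a_1', a_2')$ with $a_0 <_i a_1 <_i a_2$ and $a_0' <_i a_1' <_i a_2'$ for every $i \neq j$, while $a_0 <_j a_1 <_j a_2$ but $a_0' <_j a_2' <_j a_1'$; such triples exist because the orders in $\A$ are independent, and they share the same $\FFn^{-j}$-qftp. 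Consider
\[
\chi(z_0, z_1, z_2) := \exists y\bigl[\varphi(z_0; y) \wedge \neg \varphi(z_1; y) \wedge \varphi(z_2; y)\bigr].
\]
Since $a_1$ lies $<_i$-between $a_0$ and $a_2$ for every order $i$, convexity of cuts forces $\neg \chi(I \bar a)$; but a lower $<_j$-cut lying strictly between $a_2'$ and $a_1'$ witnesses $\chi(I \bar a')$ via the cut-defining property of $\varphi$. Hence $\tp(I\bar a) \neq \tp(I\bar a')$, so $I$ does not collapse to $\FFn^{-j}$.

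For (1) $\Rightarrow$ (2), argue contrapositively: given an indiscernible $\MLOn$-picture $I : \A \to \pi(\UU)$ that collapses to no $\FFn^{-j}$, deduce $\opD(\pi) \ge n$. For each $j < n$, non-collapse supplies tuples $\bar a_j, \bar a_j'$ of equal $\FFn^{-j}$-qftp but distinct $\UU$-type, hence a formula $\phi_j(\bar z; \bar y_j)$ and parameters $\bar d_j \in \UU$ separating $I \bar a_j$ from $I \bar a_j'$. The goal is to promote each $\phi_j$ to a single-variable formula $\varphi_j(x;y)$ whose $b$-fibres parameterize every cut of $(A, <_j)$, and then to take $\varphi(x; y_0, \dots, y_{n-1}) := \bigvee_{j<n} \varphi_j(x; y_j)$ (padding irrelevant coordinates) as the single cut-defining formula required by Proposition \ref{Prop_118GuinHillOP}. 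NIP enters essentially here: it bounds alternations of $\phi_j$ along any indiscernible sequence, and combined with $\MLOn$-indiscernibility of $I$ together with density and independence of the generic orders, a standard Ramsey and compactness argument upgrades the ``local'' separating witness $\phi_j$ into a ``global'' cut-parameterizing witness $\varphi_j$.

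The main obstacle is this second direction. The non-collapse data is local — it merely distinguishes one pair of $\FFn^{-j}$-equivalent tuples at a time — whereas Proposition \ref{Prop_118GuinHillOP} demands a single formula realizing \emph{all} cuts in \emph{all} orders. Carrying out the upgrade coherently across all $n$ orders on the same fixed picture $I$ is the delicate step, and it is precisely here that the NIP hypothesis is indispensable.
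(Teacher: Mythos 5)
Your overall plan---contrapositive in both directions, with Proposition \ref{Prop_118GuinHillOP} as the bridge---matches the paper at a high level, but both halves as you have written them have substantive problems.

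In your (2) $\Rightarrow$ (1), the claim that ``convexity of cuts forces $\neg\chi(I\bar a)$'' does not follow. Proposition \ref{Prop_118GuinHillOP} says only that every cut of $(A;<_i)$ is \emph{realized} as some $\varphi$-fibre; it says nothing at all about the other fibres $\{a : \models\varphi(Ia;b)\}$. Nothing prevents some $b$ from giving $\varphi(Ia_0;b)\wedge\neg\varphi(Ia_1;b)\wedge\varphi(Ia_2;b)$ even though $a_1$ is $<_i$-between $a_0$ and $a_2$ in every order, so $\chi(I\bar a)$ may well hold and your proposed distinguishing formula distinguishes nothing. This is exactly the step where NIP is needed and where the paper's argument does real work: by NIP one bounds the VC-dimension of $\varphi$ by some $m$, picks any $m$-tuple $\cc$, uses shattering failure to find a pattern $s\in{}^m2$ that $\varphi$ cannot realize on $I\cc$, then uses the assumed $\FFn^{-j}$-indiscernibility to replace $\cc$ by a $\dd$ of the same $\FFn^{-j}$-type whose $<_j$-order separates the $s=1$ coordinates from the $s=0$ coordinates; but then the $<_j$-cut between them would realize $s$ via $\varphi$, a contradiction. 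Your triple-and-$\chi$ shortcut skips this and has no independent justification.

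In your (1) $\Rightarrow$ (2), you have the role of NIP reversed and, as you acknowledge, no actual argument. The paper explicitly remarks that NIP is used \emph{only} in (2) $\Rightarrow$ (1); the direction you are discussing does not need it. More importantly, there is no need to ``upgrade'' the local non-collapse witnesses to a single global cut-parameterizing formula via Proposition \ref{Prop_118GuinHillOP}. The paper builds an IRD-pattern of length $n$ directly from the non-collapse data: for each $j<n$, reduce the witnessing pair $\aa^j,\bb^j$ to tuples differing by a single $<_j$-transposition, extract a separating $\L$-formula $\psi_j$, then for each finite depth $m$ place parameters $d^j_\ell$ and $e^j_\ell$ in $\A$ (using density and independence of the $n$ orders) so that the separated coordinate sweeps across $m$ positions in $<_j$ while everything else is held fixed; padding $\psi_j$ yields the pattern, and compactness gives $\opD(\pi)\ge n$. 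The obstacle you identify (``local data versus a single global formula'') is not an obstacle that arises in the paper's route, because the paper never passes back through the cut-parameterization characterization in this direction.
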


%%%% This note is no longer needed, as this is not used in (2) -> (1). (Vince: 16 April 2015) %%%

%Consider any finite $X \subseteq {}^n L$, the set of all functions from $n$ to $L$, with the orderings
%\[
% f <_i g \text{ iff. } f(i) < g(i)
%\]
%for each $i < n$.  Although $X$ cannot be embedded into $\A \models \MLOn^*$ as each $<_i$ is not a linear order, by arbitrarily extending each partial order $<_i$ to a linear order, we can consider $X$ as a substructure of $\A$ (i.e., a model of $\MLOn$).  Another way of thinking about it is taking $\sigma : X \to \A$ where only positive information is preserved (i.e., $f <_i g$ implies $\sigma(f) <_i \sigma(g)$ for all $i$).  We will use this in the proof below.

\begin{proof} %[Proof of Theorem \ref{Thm_opDimIndisc}]
 (1) $\Rightarrow$ (2): Suppose (2) fails.  That is, there exists an indiscernible $\MLOn$-picture $I: \A \to \pi(\UU)$ such that, for all $j < n$, $I$ is not an indiscernible picture for $\FFn^{-j} := \{ <_i : i \neq j, i < n \}$.  That is, there exists $k < \omega$ and $\aa^j, \bb^j \in A^k$ such that
 \[
  \tp_{\FFn^{-j}}^\A(\aa^j) = \tp_{\FFn^{-j}}^\A(\bb^j), \tp_{\FFn}^\A(\aa^j) \neq \tp_{\FFn}^\A(\bb^j), \text{ and } \tp_{\L}^{\UU}(I\aa^j) \neq \tp_\L^\UU(I\bb^j).
 \]
 We may assume the same $k$ works for all $j < n$ (take the maximal such) and the coordinates of $\aa^j$ are distinct and the coordinates of $\bb^j$ are distinct.  Without loss of generality, by reordering, suppose
 \[
  a^j_0 <_j a^j_1 <_j ... <_j a^j_{k-1}.
 \]
 Since transpositions generate the symmetric group on $k$, we may assume that the $<_j$-order type of $\aa^j$ and $\bb^j$ differ by transposing two elements.  That is, for some $t_j < k-1$,
 \[
  b^j_0 <_j ... <_j b^j_{t_j-1} <_j b^j_{t_j+1} <_j b^j_{t_j} <_j b^j_{t_j+2} <_j ... <_j b^j_{k-1}.
 \]
 Since $\tp_\L^\UU(I\aa^j) \neq \tp_\L^\UU(I\bb^j)$, there exists $\psi_j(y_0, ..., y_{k-1}) \in \L$ such that
 \[
  \models \psi_j(I\aa^j) \wedge \neg \psi_j(I\bb^j).
 \]
 Fix $m < \omega$.  We use this as a template to construct an IRD-pattern of depth $m$ and length $n$ in $\pi$, showing that $\opD(\pi) \ge n$ (and hence (1) fails).

 For each $j < n$, choose
 \[
  d^j_0, ..., d^j_{t_j-1}, d^j_{t_j+1}, ..., d^j_{k-1} \in {}^n \mathbb{Q}
 \]
 such that, for all $i < n$,
 \begin{itemize}
  \item $d^j_\ell(i) < d^j_s(i)$ if $a^j_\ell <_i a^j_s$ for all $\ell, s \neq t_j$,
	\item $d^j_\ell(i) < -1$ if $a^j_\ell <_i a^j_{t_j}$ for all $\ell \neq t_j$,
	\item $d^j_\ell(i) > m$ if $a^j_\ell >_i a^j_{t_j}$ for all $\ell \neq t_j$.
 \end{itemize}
 Thereby, for all $f \in {}^n (-1,m)_\mathbb{Q}$, for all $i$,
 \[
  \langle d^j_0, ..., d^j_{t_j-1}, f, d^j_{t_j+1}, ..., d^j_{k-1} \rangle
 \]
 has the same $<_i$-order type as $\aa$.  In other words, for all $f \in {}^n (-1,m)_\mathbb{Q}$,
 \[
  \tp_{\FFn}^{{}^n \mathbb{Q}}(d^j_0, ..., d^j_{t_j-1}, f, d^j_{t_j+1}, ..., d^j_{k-1}) = \tp_{\FFn}^\A(\aa^j).
 \]
 For each $j < n$ and $\ell < m$, define $e^j_\ell \in {}^n \mathbb{Q}$ as follows: For each $i < n$,
 \begin{itemize}
  \item $e^j_\ell(i) = d^j_{t_j+1}(i)$ for all $i \neq j$,
	\item $e^j_\ell(j) = \ell - 1/2$.
 \end{itemize}
 So, for $e^j_\ell$, we are merely taking $d^j_{t_j+1}$ and stretching it, in the $j$th coordinate, across $(-1,m+1)_\mathbb{Q}$.  Now, for any $f \in {}^n m$, for any $j < n$ and any $\ell < m$, we have
 \begin{itemize}
  \item $d^j_0 <_j ... <_j d^j_{t_j-1} <_j f <_j e^j_\ell <_j d^j_{t_j+2} <_j ... <_j d^j_{k-1}$ for $f(j) < \ell$ and,
	\item $d^j_0 <_j ... <_j d^j_{t_j-1} <_j e^j_\ell <_j f <_j d^j_{t_j+2} <_j ... <_j d^j_{k-1}$ for $f(j) \ge \ell$.
 \end{itemize}
 In other words, for any $f \in {}^n m$, $\ell < m$, and $j < n$,
 \begin{itemize}
  \item $\tp_{\FFn}^{{}^n \mathbb{Q}}(d^j_0, ..., d^j_{t_j-1}, f, e^j_\ell, d^j_{t_j+2}, ..., d^j_{k-1}) = \tp_{\FFn}^\A(\aa^j)$ for $f(j) < \ell$ and,
	\item $\tp_{\FFn}^{{}^n \mathbb{Q}}(d^j_0, ..., d^j_{t_j-1}, f, e^j_\ell, d^j_{t_j+2}, ..., d^j_{k-1}) = \tp_{\FFn}^\A(\bb^j)$ for $f(j) \ge \ell$.
 \end{itemize}
 After embedding into $\A$ (by first extending each $<_i$ arbitrarily to be a linear order), we see that, for all $f \in {}^n m$, $\ell < m$, and $j < n$,
 \[
  \models \psi_j(Id^j_0, ..., Id^j_{t_j-1}, If, Ie^j_\ell, Id^j_{t_j+2}, ..., Id^j_{k-1}) \text{ iff. } f(j) < \ell.
 \]
 Let $\psi'_j(y; y_0, ..., y_{t_j-1}, y_{t_j+1}, ..., y_{k-1}) = \psi_j(y_0, ..., y_{t_j-1}, y, y_{t_j+1}, ..., y_{k-1})$, let $\alpha_f = If \models \pi$ for all $f \in {}^n m$, and let
 \[
  \beta_{j,\ell} = \langle Id^j_0, ..., Id^j_{t_j-1}, Ie^j_\ell, Id^j_{t_j+2}, ..., Id^j_{k-1} \rangle
 \]
 for all $j < n$ and $\ell < m$.  Then, for all $j < n$, $\ell < m$, and $f \in {}^n m$,
 \[
  \models \psi'_j(\alpha_c; \beta_{j,\ell}) \text{ iff. } f(j) < \ell.
 \]
 Hence, $\psi'_j$ and $\beta_{j,\ell}$ form an IRD-pattern of depth $m$ and length $n$ in $\pi$.  Since $m$ was arbitrary, by compactness, we get that $\opD(\pi) \ge n$, as desired.

%%%%%%%%%%%%%%%%%%%%%%%%%%%%%%%%%%%%%%%%%%%%%%%%%%%
%% Begin : Vince's Repair of Thm 3.5, (2) -> (1) %%
%%                 13 April 2015                 %%
%%%%%%%%%%%%%%%%%%%%%%%%%%%%%%%%%%%%%%%%%%%%%%%%%%%

 (2) $\Rightarrow$ (1): Suppose (1) fails, so $\opD(\pi) \ge n$.  By Proposition \ref{Prop_118GuinHillOP}, there exists an indiscernible $\MLOn$-picture $I: \A \to \pi(\UU)$ and a formula $\varphi(x;y)$ such that, for all $j < n$ and for all $X \subseteq A$ a cut of $(A; <_j)$, there exists $b \in \UU$ such that $X = \{ a \in \A : \models \varphi(Ia; b) \}$.  Since $T$ has NIP, $\varphi(x;y)$ has NIP, so fix $m < \omega$ such that $\varphi$ has VC-dimension $< m$.  That is, 
 \[
  \models \neg \exists x_0, ..., x_{m-1} \bigwedge_{s \in {}^m 2} \left( \exists y \bigwedge_{\ell < m} \varphi(x_\ell, y)^{s(\ell)} \right).
 \]

 We show that (2) fails on $I$.  Fix $j < n$ and suppose, by means of contradiction, that $I$ is an indiscernible picture for $\FFn^{-j} := \{ <_i : i \neq j, i < n \}$.  Fix any $\cc \in \A^m$.  By choice of $m$ (i.e., that $m$ is greater than the VC-dimension of $\varphi$), there exists $s \in {}^m 2$ such that
\[
 \models \neg \exists y \bigwedge_{\ell < m} \varphi(Ic_\ell, y)^{s(\ell)}.
\]
Let $L_0 = \{ \ell < m : s(\ell) = 0 \}$ and $L_1 = \{ \ell < m : s(\ell) = 1 \}$.  Choose $\dd \in \A^m$ such that
 \begin{itemize}
  \item $\tp_{\FFn^{-j}}^{\A}(\cc) = \tp_{\FFn^{-j}}^{\A}(\dd)$, and
	\item For all $\ell_1 \in L_1$ and $\ell_0 \in L_0$, $d_{\ell_1} <_j d_{\ell_0}$.
 \end{itemize}
 By indiscernibility with respect to $\FFn^{-j}$,
 \[
  \models \neg \exists y \bigwedge_{\ell < m} \varphi(Id_\ell, y)^{s(\ell)}.
 \]
 Without loss of generality, by reordering, suppose $d_0 <_j d_1 <_j ... <_j d_{m-1}$, so we have
 \[
  \models \neg \exists y \left( \bigwedge_{\ell < m_0} \varphi(Id_\ell; y) \wedge \bigwedge_{m_0 \le \ell < m} \neg \varphi(Id_\ell; y) \right),
 \]
 where $m_0 = |L_1|$.  Now choose any $X \subseteq A$ a cut of $(A; <_j)$ with $d_\ell \in X$ if and only if $\ell < m_0$.  Therefore, there exists $b \in \UU$ such that $X = \{ a \in \A : \varphi(Ia; b) \}$.  In particular,
 \[
  \models \bigwedge_{\ell < m_0} \varphi(Id_\ell, b) \wedge \bigwedge_{m_0 \le \ell < m} \neg \varphi(Id_\ell, b).
 \]
 This is a contradiction.  Hence (2) fails and we are done.

\end{proof}

Note that we only use NIP in the proof of Theorem \ref{Thm_opDimIndisc}, (2) $\Rightarrow$ (1).  However, this use is necessary.  Consider the theory $T$ of the random graph (where $\L = \{ R \}$) and take $\pi(x) = [x=x]$ in the homesort.  Then, any indiscernible sequence is an indiscernible set (either everything is $R$-connected or $R$-disconnected).  That is, Theorem \ref{Thm_opDimIndisc} (2) holds for $n = 1$.  On the other hand, Theorem \ref{Thm_opDimIndisc} (1) fails for all $n$ (i.e., $\opD(\pi) = \infty$).  To see this, notice that $R(x;y)$ together with any distinct $\langle \langle b_{i,j} : j < \omega \rangle : i < n \rangle$ form an IRD-pattern of length $n$.  Indeed, for any $f : n \rightarrow \omega$, the type
\[
 \{ R(x; b_{i,j}) : i < n, j < f(i) \} \cup \{ \neg R(x; b_{i,j}) : i < n, f(i) \le j < \omega \}
\]
is consistent by saturation.

%% Added this note (16 April 2015) %%
If we assume that $T$ is o-minimal, since dp-rank and op-dimension coincide, then this collapse of multi-order indiscernibles characterizes dp-rank.

%%%%%%%%%%%%%%%%%%%%%%%%%%%%%%%%%%%%%%%%%%%%%
%%%%% END ADDED BY VINCE - 27 MARCH 2014 %%%%
%%%%%%%%%%%%%%%%%%%%%%%%%%%%%%%%%%%%%%%%%%%%%

%%%%%%%%%%%%%%%%%%%%%%%%%%%%%%%%%%%%%%%%%%%%%
%%%%%%%%%%%%%%%%%%%%%%%%%%%%%%%%%%%%%%%%%%%%%
%%%%%%%%%%%%%%%%%%%%%%%%%%%%%%%%%%%%%%%%%%%%%
%%%%%%%%%%%%%%%%%%%%%%%%%%%%%%%%%%%%%%%%%%%%%
\section{Rosiness and indsicernible function spaces}\label{Sect_Rosiness}

%%%%%%%%%%%%%%%%%%%%%%%%%%%%%%%%%%%%%%%%%%%%%
%%%%%%%%%%%%%%%%%%%%%%%%%%%%%%%%%%%%%%%%%%%%%
\subsection{Background on rosiness}

There are several equivalent ways to define rosy theories, but for the sake of brevity, we present the one that is closest to the work we will carry in this article. This definition is based on the following notion of ``equivalence relation ranks'' as proposed in \cite{EalyOnshuus}.

We remark that when defined in terms of local \th-ranks or the behavior of \th-independence, it is very important to work with a theory that eliminates imaginaries; that is, the class of rosy theories is strictly contained the class of ``real-rosy theories.'' However, the characterization in terms of equivalence relation ranks eliminates this discrepancy -- a theory $T$ has finite equivalence relation ranks if and only if $T^\eq$ has finite equivalence relation ranks.

%%%%%-----------------------------------------------------------------------------------------------------%%%%%
\begin{defn}[Equivalence-relation-rank: $\eqrk$]
Let $\pi(x)$ be a partial type, and let $\Delta$ be a finite set of formulas $\delta(x,x',z)$ such that for every $c$, $\delta(x,x',c)$ defines an equivalence relation. (We call such a $\Delta$ a {\em finite set of families of equivalence relations}.)
\begin{itemize}
\item $\eqrk(\pi(x),\Delta)\geq0$
\item For a limit ordinal $\lambda$, $\eqrk(\pi(x),\Delta)\geq\lambda$ if $\eqrk(\pi(x),\Delta)\geq\alpha$ for all $\alpha<\lambda$.
\item $\eqrk(\pi(x),\Delta)\geq\alpha+1$ if there are a formula $\delta(x,x',z)\in \Delta$ and a parameter $c$ such that, with $E(x,x') = \delta(x,x',c)$, the set,
$$\left\{b/E:b\models\pi\wedge\eqrk(\pi(x)\cup\{E(x,b)\},\Delta)\geq\alpha\right\}$$
is infinite.
\item $\eqrk(\pi(x),\Delta)=\infty$ if $\eqrk(\pi(x),\Delta)\geq\alpha$ for every ordinal $\alpha$.
\end{itemize}
\end{defn}

%%%%%-----------------------------------------------------------------------------------------------------%%%%%
\begin{rem}\label{fact:omega-is-infty}
Let $\pi(x)$ be a partial type, and let $\Delta$ be a finite set of families of equivalence relations. By compactness, if $\eqrk(\pi(x),\Delta)\geq\omega$, then $\eqrk(\pi(x),\Delta)=\infty$.
\end{rem}

The key theorem of \cite{EalyOnshuus} (for our purposes) is the following.

%%%%%-----------------------------------------------------------------------------------------------------%%%%%
\begin{thm}[\cite{EalyOnshuus}]
Let $T$ be a theory with big model $\UU$. $T$ is rosy if and only if $\eqrk(\pi(x),\Delta)<\omega$ for every partial type $\pi(x)$ and every finite set $\Delta$ of families of equivalence relations.
\end{thm}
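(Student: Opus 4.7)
The plan is to identify $\eqrk(\pi,\Delta)$ with an avatar of the local \th-rank, so that finiteness of the former is equivalent to the standard local-character formulation of rosiness. Recall that $T$ is rosy precisely when every local \th-rank is ordinal-valued (equivalently, finite by compactness), and that a single drop of the local \th-rank corresponds to \th-dividing by a formula refining a single class of a definable equivalence relation with infinitely many classes. With this dictionary in place, both directions become formal manipulations on ordinal ranks.

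For the forward direction I would argue by contrapositive. Suppose $\eqrk(\pi(x),\Delta)\geq\omega$, hence $=\infty$ by Remark \ref{fact:omega-is-infty}. Unwinding the inductive clause produces, step by step, parameters $c_n$, formulas $\delta_n(x,x',z)\in\Delta$ (with repetitions allowed), equivalence relations $E_n(x,x') := \delta_n(x,x',c_n)$, and class representatives $b_n$ such that for each $n<\omega$ the partial type
$$\pi_n(x) := \pi(x)\cup\{E_0(x,b_0),\ldots,E_{n-1}(x,b_{n-1})\}$$
still has $\eqrk\geq\alpha$ for every ordinal $\alpha$ while the set of $\pi_n$-compatible classes $b/E_n$ is infinite. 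The latter is precisely the assertion that $E_n(x,b_n)$ \th-divides over the parameters accumulated so far, so concatenating the steps exhibits an infinite \th-forking chain extending $\pi$, contradicting the local character of \th-forking and hence rosiness.

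For the converse, assume $\eqrk(\pi,\Delta)<\omega$ for every $\pi$ and $\Delta$, and argue again by contrapositive. If $T$ is not rosy, some partial type $\pi(x)$ supports an infinite \th-forking chain. The crucial reduction is that a formula $\phi(x,a)$ \th-dividing over $A$ is witnessed by a finite family of $A$-definable equivalence relations, each with infinitely many classes, such that $\phi(x,a)$ forces $x$ into a single class of their intersection. Iterating along the chain and collecting all the $\delta$'s that define these equivalence relations into a single finite set $\Delta$ produces a witness that $\eqrk(\pi,\Delta)\geq\omega$, and hence $=\infty$ by Remark \ref{fact:omega-is-infty}, contradicting the hypothesis. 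The main obstacle is precisely this reduction: one must bound the complexity of the refinement uniformly so that a \emph{single} finite $\Delta$ captures every formula arising in the chain, which is a compactness argument of the sort carried out in \cite{EalyOnshuus}.
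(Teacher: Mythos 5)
The paper does not prove this theorem; it is stated as a citation to Ealy and Onshuus, so there is no in-paper argument for me to compare your sketch against. Evaluating the sketch on its own terms: the overall strategy -- identify $\eqrk$ with a local \th-rank and then invoke the local character formulation of rosiness -- is the right one and is indeed the route taken in the cited source, but the proposal does not constitute a self-contained proof because the step you flag as ``the crucial reduction'' is, in fact, the entire content of the hard direction and you only assert it.

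Two concrete issues with the converse as you stated it. First, ``a formula $\phi(x,a)$ \th-dividing over $A$ is witnessed by a finite family of $A$-definable equivalence relations $\ldots$ such that $\phi(x,a)$ forces $x$ into a single class of their intersection'' is not the definition of \th-dividing and is not an immediate consequence of it: strong dividing says the conjugates $\{\phi(x,a') : a' \equiv_B a\}$ are $k$-inconsistent with $a \notin \acl(B)$, and extracting from this a genuine (parameterized) \emph{equivalence relation} $\delta(x,x',z)$ on the $x$-sort requires a coding argument; in general that coding passes through imaginaries. That is precisely why the paragraph preceding the theorem in the paper emphasizes that $\eqrk$, unlike the raw local \th-ranks, is $T^{\eq}$-invariant -- the statement would otherwise not even make sense for theories far from elimination of imaginaries. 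Second, the uniformity claim (``a \emph{single} finite $\Delta$ captures every formula arising in the chain'') does hold, but for the reason that non-rosiness is already witnessed by a single formula $\delta$ with $\eqrk(x{=}x,\{\delta\}) = \infty$ via compactness; as written your argument suggests you would collect an a priori unbounded family of $\delta$'s and then need to bound it, which is backwards. Since the paper simply cites the result, there is no disagreement with the paper, but if the goal is a genuine proof, both of these points need to be carried out rather than deferred.
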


\medskip

In the remainder of this subsection, we define a variant of the equivalence relation ranks as a coding property, and we show that non-rosiness can also be characterized by ``coding equivalence relations.''

%%%%%-----------------------------------------------------------------------------------------------------%%%%%
\begin{defn}
Let $T$ be a theory with big model $\UU$. For a formula $\phi(x;y_0,y_1)$, we say that $\phi$ {\em codes equivalence relations} if for every $0<n<\omega$, there are $(a_{i})_{i<n}$ of sort $x$ and $(b_f)_{f\in {^n}n}$ of sort $y$ such that for all $i<n$ and $f_0,f_1\in {^n}n$, 
$$\models\phi(a_{i};b_{f_0},b_{f_1})\,\,\iff\,\,f_0(i)=f_1(i).$$
\end{defn}

%%%%%-----------------------------------------------------------------------------------------------------%%%%%
\begin{lemma}\label{lemma:non-rosy-equals-coding}
Let $T$ be a theory with big model $\UU$, and assume $T$ eliminates imaginaries. $T$ is non-rosy if and only if there is a  partitioned formula $\phi(x;y_0,y_1)$ that codes equivalence relations in $\UU$.
\end{lemma}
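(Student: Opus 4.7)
The plan is to establish each direction through the Ealy-Onshuus characterization of non-rosy (as the existence of some $\pi, \Delta$ with $\eqrk(\pi, \Delta) = \infty$), converting between the coding property of $\phi$ and infinite equivalence-relation rank.

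For the backward direction ($\Leftarrow$), I would convert the coding formula $\phi$ into a single family of equivalence relations witnessing infinite $\eqrk$. The natural candidate is
\[
\delta(y_0, y_1; x, \vec{z}) := \bigwedge_{j} \bigl(\phi(x; y_0, z_j) \leftrightarrow \phi(x; y_1, z_j)\bigr),
\]
which is always an equivalence relation in $(y_0, y_1)$. Given the $n$-coding, choosing $\vec{z}$ among the $b_f$'s so that the $i$-th components cover $\{0, \ldots, n-1\}$ makes $\delta(\cdot, \cdot; a_i, \vec{z})$ restricted to $\{b_f\}$ coincide with the ``$i$-th coordinate'' partition. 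Iterating $a_0, \ldots, a_{n-1}$ produces a tower of refining partitions into $n$ pieces each. Since $n$ is arbitrary and, using elimination of imaginaries, the parameter tuples $\vec{z}$ can be packaged as single imaginary elements, the rank bound grows with $n$; combining this with saturation of $\UU$ upgrades ``$n$-many'' to ``infinitely many,'' yielding $\eqrk(\pi, \{\delta\}) \geq n$ for every $n$, hence $\eqrk = \infty$ by Remark \ref{fact:omega-is-infty}. Ealy-Onshuus then gives that $T$ is non-rosy.

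For the forward direction ($\Rightarrow$), Ealy-Onshuus supplies $\pi(x)$ and a finite $\Delta$ with $\eqrk(\pi, \Delta) = \infty$. First reduce to $|\Delta| = 1$ via a selector variable,
\[
\delta(y_0, y_1; u, z_0, \ldots, z_{k-1}) := \bigwedge_{j<k}\bigl(u = j \rightarrow \delta_j(y_0, y_1; z_j)\bigr),
\]
which is still a family of equivalence relations and inherits infinite rank. Take $\phi(x; y_0, y_1) := \delta(y_0, y_1; x)$ as the candidate coding formula. For each $n$, iterating $\eqrk \geq n$ produces a tree of $\delta$-equivalence classes of depth $n$ with arbitrarily wide branching, giving parameters $c_\sigma$ at internal nodes and representatives $b_\sigma \models \pi$ at the leaves.

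The main obstacle is converting this tree into a grid: the $\eqrk$ construction naturally yields nested refining equivalence relations, while the coding demands a single parameter $a_i$ per coordinate whose $\delta$-equivalence globally realizes the $i$-th coordinate partition on all $n^n$ chosen leaves. I would bridge this by a Ramsey-type homogenization: extract a sub-configuration of the tree in which the parameters used at each level are uniform across sibling nodes, so a single parameter per level suffices globally. Equivalently, one can read the tree data as an $\EM$-template for the function-space theory $T_\fcn$ introduced in Section \ref{Sect_Indisc} and invoke its modeling property to obtain an indiscernible $T_\fcn$-picture in $\UU$; the grid structure intrinsic to $T_\fcn^*$ then transfers through the picture, and the formula interpreting the parameterized equivalence $E$ becomes the desired $\phi$. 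Elimination of imaginaries enters through Ealy-Onshuus, allowing the rank computation to stay within $T$ itself rather than requiring passage to $T^{\mathrm{eq}}$.
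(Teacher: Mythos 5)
Your plan for the forward direction matches the paper's: reduce to a single $\delta$ by a selector trick, extract the $\eqrk$-tree of parameters $(c_\tau,d_\sigma)$, homogenize it into a grid (the paper does this via the Pigeonhole Principle and automorphisms rather than a Ramsey argument or the modeling property of $T_\fcn$, but it is the same move), and set $\phi(x;y_0,y_1):=\delta(y_0,y_1;x)$. So that half is essentially correct, if somewhat heavier than necessary.

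The backward direction has a genuine gap. Your candidate $\delta(y_0,y_1;x,\vec z):=\bigwedge_j\bigl(\phi(x;y_0,z_j)\leftrightarrow\phi(x;y_1,z_j)\bigr)$ cannot serve as the family of equivalence relations. To make $\delta(\cdot,\cdot;a_i,\vec z)$ restricted to the $b_f$'s realize the $i$-th coordinate partition into $n$ pieces you must let $|\vec z|$ grow with $n$; but $\eqrk$ is computed relative to a formula of fixed arity, and the Ealy--Onshuus criterion requires a \emph{single} finite $\Delta$ with infinite rank. Having $\eqrk(\pi,\{\delta_n\})\geq n$ for a different $\delta_n$ at each $n$ does not contradict rosiness, and elimination of imaginaries cannot collapse formulas of different arities into one $\L^{\mathrm{eq}}$-formula. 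Worse, for any fixed $|\vec z|=k$ the relation $\delta(\cdot,\cdot;a_i,\vec z)$ partitions the $b_f$'s into at most $2^k$ classes, while the successor clause in the definition of $\eqrk$ demands \emph{infinitely many} classes at each level. The repair is to quantify out the auxiliary variable instead of passing it as a parameter: the paper takes $\delta(y_0,y_1;x):=\forall y\,\bigl(\phi(x;y_0,y)\leftrightarrow\phi(x;y_1,y)\bigr)$, a single fixed-arity formula whose $\delta(\cdot,\cdot;a_i)$-classes on the $b_f$'s are exactly the fibers of $f\mapsto f(i)$. This gives infinitely many classes at every level, so the induction yields $\eqrk(x{=}x,\{\delta\})\geq n$ for all $n$, hence $\geq\omega$, hence $=\infty$ by Remark \ref{fact:omega-is-infty}.
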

\begin{proof}
First, suppose $\phi(x;y_0,y_1)$ codes equivalence relations. Then, let 
$$\delta(y_0,y_1;x)=\forall y\left(\phi(x;y_0,y)\bic\phi(x;y_1,y)\right).$$
By compactness and the fact that $\phi(x;y_0,y_1)$ codes equivalence relations, there are $(a_{i})_{i<\omega}$ and $(b_f)_{f\in {^\omega}\omega}$ such that for all $i<\omega$ and $f_0,f_1\in {^\omega}\omega$, 
$$\models\phi(a_{i};b_{f_0},b_{f_1})\,\,\iff\,\,f_0(i)=f_1(i).$$
Now, we make a simple observation:
Let $\sigma\in{^{<\omega}}\omega$, and for each $j<\omega$, let $\sigma\concat j\subset f_j\in{^\omega}\omega$; then, 
$$\eqrk\left(\bigwedge_{\ell<|\sigma|}\delta(y,b_{f_0};a_\ell)\wedge\delta(y,b_{f_j};a_{|\sigma|}),\left\{\delta\right\}\right)\geq0$$
for every $j<\omega$. From this, it follows immediately that $\eqrk(x{=}x,\{\delta\})\geq\omega$, and this shows that $T$ is not rosy.

\medskip
Conversely, suppose $T$ is not rosy. Through simple coding tricks, we may assume that there is a single family of equivalence relations $\delta(x,x';z)$ such that  $\eqrk(x{=}x,\{\delta\})=\infty$. %From this, we recover $(c_{\sigma\concat j})_{\sigma\in {^\omega}\omega,j<\omega}$ and $(d_\sigma)_{\sigma\in {^\omega}\omega}$ such that for any $\sigma\in {^\omega}\omega$ and $\tau = \sigma\concat j$ ($j<\omega$) %% Old Version
From this, we recover $(c_\tau)_{\tau\in {^{<\omega}}\omega}$ and $(d_\sigma)_{\sigma\in {^{<\omega}}\omega}$ such that, for any $\sigma\in {^{<\omega}}\omega$ and $\tau = \sigma\concat j$ ($j<\omega$)  %% New Version (Vince: 30 April 2015)
$$\eqrk\left(\bigwedge_{\ell<|\sigma|}\delta(x,c_{\tau\r(\ell+1)};d_{\sigma\r \ell}),\{\delta\}\right)\geq\omega.$$
By the Pigeonhole Principle, we may assume that $c_{\sigma\concat j}d_\sigma\equiv c_{\sigma'\concat j'}d_{\sigma'}$ whenever $|\sigma|=|\sigma'|$. Now, let $0<n<\omega$ be given. Applying several automorphisms, we may assume that $d_\sigma=d_{\sigma'}=:a_{|\sigma|}$ whenever $\sigma,\sigma'\in {^{{<}n}}n$ satisfy $|\sigma|=|\sigma'|$; then, for $f\in {^n}n$, we choose any $b_f\models\bigwedge_{\ell<n}\delta(x,c_{f\r(\ell{+}1)};d_{f\r\ell})$. As $n$ was arbitrary, we have shown that $\phi(x;y_0,y_1)=\delta(y_0,y_1;x)$ codes equivalence relations.
\end{proof}

%% Above, I changed instances of "{^\omega}\omega" with "{^{<\omega}}\omega", since that is clearly what you meant. (Vince: 30 April 2015)

%%%%%%%%%%%%%%%%%%%%%%%%%%%%%%%%%%%%%%%%%%%%%
%%%%%%%%%%%%%%%%%%%%%%%%%%%%%%%%%%%%%%%%%%%%%
\subsection{Ramsey theory for function spaces}

We have established that non-rosiness is equivalent to ``coding equivalence relations.'' In this subsection, develop of theory of indiscernibles that corresponds to the latter notion.

%%%%Lynn: 16 September 2015 modified this argument.
%%%%%-----------------------------------------------------------------------------------------------------%%%%%
\begin{prop}
$(T_\fcn,\L_\fcn)$ is a theory of indiscernibles.
\end{prop}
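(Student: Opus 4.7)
The plan is to leverage the identification noted in the function-spaces example: the automorphism group of the generic model of $(T_\fcn,\FF_\fcn)$ coincides with the automorphism group of the Shelah tree $I_s = (\leftexp{\omega>}{\omega},\lhd,\wedge,\lx,\{P_n\}_n)$. Since $(T_\fcn,\FF_\fcn)$ is already \Fraisse-like, by the characterization theorem at the end of Subsection~2.2 it suffices to verify the Finitary Ramsey Property.

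First I would set up a precise dictionary between finite models $\aas\models T_\fcn$ and finite level-labelled lex-closed subsets of $I_s$. Given $\aas\models T_\fcn$ with $\SX^\aas = \{x_0<_\SX\cdots<_\SX x_{m-1}\}$, the equivalence relations $\bigcap_{i<k}E_{x_i}$ for $k\leq m$ refine one another, and by the convexity axioms of $T_\fcn$, every class of $\bigcap_{i<k+1}E_{x_i}$ is a $<_\SF$-convex subset of its $\bigcap_{i<k}E_{x_i}$-class. This refinement data is exactly encoded by a finite meet-closed, lex-ordered subset of $\leftexp{m\geq}{\omega}$, whose leaves at level $m$ are in bijection with $\SF^\aas$ and whose internal vertices are the nested equivalence classes. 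The role of the fragment formula $(\exists x\in\SX)\neg E(x,u,u')$ is precisely to record whether two leaves split below level $|\SX^\aas|$; consequently $\FF_\fcn$-embeddings $\aas\preceq_{\FF_\fcn}\bbs$ correspond bijectively and functorially to embeddings of the associated finite subtrees of $I_s$ preserving $\lhd$, $\wedge$, $\lx$, and level.

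With this dictionary in place, I would import the Ramsey statement for finite lex-closed meet-subtrees of $I_s$ already established in \cite{KimKimScow}. Given $\aas,\bbs\models T_\fcn$ and $k<\omega$, pass to the corresponding finite subtrees $F(\aas),F(\bbs)\subset I_s$, apply the tree Ramsey theorem to obtain a subtree $F(\ccs)\subset I_s$ with $F(\ccs)\to (F(\bbs))^{F(\aas)}_k$, and read the partition property back through the dictionary to conclude $\ccs\to(\bbs)^\aas_k$ for some $\ccs\models T_\fcn$. This establishes the $\aas$-Ramsey Property for every $\aas\models T_\fcn$, hence the full Ramsey Property, and the characterization theorem completes the argument.

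The main obstacle is step one: verifying that the convexity and refinement axioms of $T_\fcn$ yield exactly the tree structure used in \cite{KimKimScow}, and crucially that $\FF_\fcn$-embeddings -- as opposed to arbitrary $\L_\fcn$-embeddings -- correspond bijectively to the tree embeddings for which the Ramsey theorem there is stated. The use of a non-trivial fragment is essential here, because $\FF_\fcn$ suppresses precisely the parts of full $\L_\fcn$-structure (such as equivalence at levels above $|\SX^\aas|$) that would otherwise obstruct the correspondence. Once this dictionary is nailed down the Ramsey transfer is routine; alternatively, one may phrase the whole argument via extreme amenability of $\mathrm{Aut}(\A)$ for $\A\models T_\fcn^*$, inherited from $\mathrm{Aut}(I_s)$ via the KPT correspondence suitably adapted to the \Fraisse-like setting.
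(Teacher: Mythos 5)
Your route is genuinely different from the paper's. The paper argues by induction on $n$ that the automorphism group of the generic model of each $(T_{\eq:n},\FF_\fcn)$ is extremely amenable -- the base case is interdefinable with linear order; the inductive step identifies the group as $\mathrm{Aut}(\mathbb{Q},<)^{\mathbb{Q}}\rtimes A$ -- and then passes to $(T_\fcn,\FF_\fcn)$ by a density argument (Lemma~6.7 of \cite{KPT}) or by compactness on the Modeling Property. You instead want to read off the Ramsey Property directly from the tree Ramsey theorem of \cite{KimKimScow} via a dictionary between finite $T_\fcn$-models and finite subtrees of $I_s$.

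There is a gap in the dictionary. You assert that $\FF_\fcn$-embeddings $\aas\to\bbs$ correspond to tree embeddings preserving $\lhd$, $\wedge$, $\lx$, \emph{and level}. They do not preserve level: an embedding carries $\SX^\aas$ order-preservingly into $\SX^\bbs$, and when $|\SX^\aas|<|\SX^\bbs|$ -- the relevant case in any nontrivial Ramsey statement $\ccs\to(\bbs)^\aas_k$ -- the induced map on tree levels is an arbitrary increasing function, not an inclusion of initial segments. Concretely, take $\SX^\aas=\{x_0\}$, $\SF^\aas=\{u_0<_\SF u_1\}$ with $\neg E(x_0,u_0,u_1)$, and $\SX^\bbs=\{y_0<_\SX y_1\}$, $\SF^\bbs=\{v_0<_\SF v_1\}$ with $E(y_0,v_0,v_1)\wedge\neg E(y_1,v_0,v_1)$. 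The unique embedding sends $x_0\mapsto y_1$ and hence moves the level-$1$ leaves of the $\aas$-tree to the level-$2$ leaves of the $\bbs$-tree. So the colorings you must handle live on strong-subtree-style embeddings with flexible level-sets, not the $P_n$-preserving embeddings for which the cited statement is phrased; you would need to supply the appropriate Milliken-type theorem and verify the pullback. Relatedly, the role you ascribe to the fragment is off: on models of $T_\fcn$ the formula $(\exists x\in\SX)\neg E(x,u,u')$ is equivalent to $u\ne u'$, so it imposes nothing extra on $\emb_{\FF_\fcn}$ between finite models. Its actual purpose is to make the non-universal axiom of $T_\fcn$ a $\forall(\FF_\fcn)$-sentence and to define the $\FF$-types over which indiscernibility is taken.

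Your closing alternative -- inherit extreme amenability of $\mathrm{Aut}(\A)$ from $\mathrm{Aut}(I_s)$, using the identification of these groups noted in the paper's example, and then invoke the KPT correspondence -- is a sound repair, and even that remains a different route from the paper, which never appeals to $I_s$ or \cite{KimKimScow} and instead builds the extreme amenability statement from scratch by the wreath-type induction over $T_{\eq:n}$.
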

\begin{proof}

First we prove that $(T_{\eq:n},\FF_\fcn)$ is a theory of indiscernibles.
The base case follows since $(T_{\eq:1},\FF_\fcn)$ is interdefinable with the theory of linear order.

%is well-known (see \cite{KPT}, for example) -- it is precisely the theory of a  convexly ordered equivalence relation that will play an important role in Section \ref{Sect_NTP2}. Proving that each $(T_{\eq:n},\FF_\fcn)$ is a theory of indiscernibles requires only small modifications to the argument for $(T_{\eq:1},\FF_\fcn)$. 

Now suppose the generic model of $T_{\eq:n}$ has extremely amenable automorphism group $A$, and consider $T_{\eq:(n+1)}$.  Call the $E_{x_i}$ simply $E_i$ for $0 \leq i \leq n$.  The $X$ sort is finite and rigid. Thus the automorphism group acts on the $\bigcap_{i\leq n-1} \leq  E_i$ classes (and permutes any point(s) in the classes.)  Adding a new equivalence relation cross-cuts each $\bigcap_{i\leq n-1} \leq  E_i$ class.  Thus the automorphism group of the generic model of $T_{eq:(n+1)}$ is $\textrm{Aut}(\mathbb{Q},<)^\mathbb{Q} \rtimes A $, which we know to be extremely amenable.

To complete the argument for $(T_\fcn,\FF_\fcn)$, there are two ways to proceed. Firstly, one can observe that the automorphism groups of the generic models of the $(T_{\eq:n},\FF_\fcn)$'s form a directed family of subgroups of the automorphism group of the generic model of $(T_\fcn,\FF_\fcn)$, and that union of this family is dense the latter; by Lemma 6.7 of \cite{KPT}, it follows that $(T_\fcn,\FF_\fcn)$ is a theory of indiscernibles. A second approach would be to use compactness and the Modeling Properties of the $(T_{\eq:n},\FF_\fcn)$'s to prove that $(T_\fcn,\FF_\fcn)$ also has the Modeling Property.
\end{proof}
%%%Lynn: end of changes 16 September 2015

Now that we know what theory of indiscernibles to work with, we can state the main characterization  theorem of this section. Its proof comprises all of the next subsection.
%%%%%-----------------------------------------------------------------------------------------------------%%%%%
\begin{thm}\label{thm:collapse-result-rosiness}
Let $T$ be a theory with big model $\UU$, and assume $T$ eliminates imaginaries. $T$ is rosy if and only if every indiscernible $T_\fcn$-picture in $\UU$ is  an indiscernible $T_2$-picture via $\SX_0 = \SX$, $\SX_1 = \SF$.
\end{thm}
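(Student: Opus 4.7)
My plan is to use Lemma~\ref{lemma:non-rosy-equals-coding} in both directions of the equivalence.

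For the non-rosy $\Rightarrow$ non-collapse direction, fix via Lemma~\ref{lemma:non-rosy-equals-coding} and compactness an $\L$-formula $\phi(x;y_0,y_1)$ together with witnesses $(a_i)_{i<\omega}$, $(b_f)_{f\in\omega^\omega}$ satisfying $\phi(a_i;b_{f_0},b_{f_1})\iff f_0(i)=f_1(i)$. Expand these witnesses to an $\L_\fcn$-structure with $\SX=\{a_i\}$ ordered by index, $\SF=\{b_f\}$ ordered lexicographically, and $E(a_i;b_f,b_g)$ interpreted as $f(i)=g(i)$. This is a model of $T_\fcn$: each $E_{a_i}$ is an evaluation-at-$i$ equivalence relation, and lex-order ensures the convexity axiom since any branch lying lex-between two branches that agree on coordinates $0,\ldots,k$ must itself agree with them on those coordinates. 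Using this as the base pattern and applying the Modeling Property for $(T_\fcn,\FF_\fcn)$ (the proposition just above), I obtain an elementary extension $\UU\preceq\UU'$ and an indiscernible $T_\fcn$-picture $I:\A\to\UU'$ patterned on this pattern. To see that $I$ fails to collapse to $T_2$, pick any triples $(x,u,v),(x,u',v')\in A$ with $u<_\SF v$, $u'<_\SF v'$, and $E(x;u,v)\land\neg E(x;u',v')$: they share a $T_2$-type, but by patterning $\UU'\models\phi(Ix;Iu,Iv)\land\neg\phi(Ix;Iu',Iv')$, so their $\UU'$-types disagree.

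For the non-collapse $\Rightarrow$ non-rosy direction, suppose $I:\A\to\UU$ is an indiscernible $T_\fcn$-picture failing to collapse to $T_2$; I derive a formula coding equivalence relations, contradicting rosiness via Lemma~\ref{lemma:non-rosy-equals-coding}. Fix $\aa,\bb$ in $\A$ with $\qtp_{T_2}(\aa)=\qtp_{T_2}(\bb)$ but $\tp^\UU(I\aa)\neq\tp^\UU(I\bb)$, witnessed by an $\L$-formula $\psi$. I first reduce to the case where $\aa=(x,u,v)$, $\bb=(x,u',v')$ are 3-tuples of sort $\SX\SF\SF$ with matching orderings and $E(x;u,v)\land\neg E(x;u',v')$, distinguished by some $\L$-formula $\psi_0(x;y_0,y_1)$. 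The reduction proceeds by constructing a finite chain of consistent $T_\fcn$-qf-types in $\A$ from $\qtp_{T_\fcn}(\aa)$ to $\qtp_{T_\fcn}(\bb)$, flipping a single $E$-atom at each step within the transitivity and convexity constraints of $T_\fcn$; since $\psi$ distinguishes the endpoints, some consecutive pair has different $I$-images, and restricting to the coordinates of the flipped atom gives the required $(\aa',\bb',\psi_0)$.

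Next, realize in $\A$ the function-space configuration $(x_i)_{i<n}\subseteq\SX$, $(u_f)_{f\in n^n}\subseteq\SF$ with the $x_i$ index-ordered, the $u_f$ lex-ordered, and $E(x_i;u_f,u_g)\iff f(i)=g(i)$; this configuration is realized in $T_\fcn^*$, which is relationally interdefinable with the Shelah tree. By $T_\fcn$-indiscernibility of $I$, the $\UU$-type of $(Ix_i,Iu_f,Iu_g)$ is a function of the $T_\fcn$-qf-type of $(x_i,u_f,u_g)$, which (for $f\neq g$) is determined by the lex-comparison of $f,g$ and by whether $f(i)=g(i)$. Taking $\phi(x;y_0,y_1):=\psi_0(x;y_0,y_1)\lor\psi_0(x;y_1,y_0)$ to neutralize the lex-order asymmetry, one verifies $\phi(Ix_i;Iu_f,Iu_g)\iff f(i)=g(i)$ for all $i<n$ and $f,g\in n^n$. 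Since $n$ was arbitrary, $\phi$ codes equivalence relations, the desired contradiction. The main obstacle is the single-atom chain reduction: one has to show that any two $T_\fcn$-qf-types with the same $T_2$-reduct can be linked by a chain of consistent $T_\fcn$-qf-types each differing in one $E$-atom, which requires a careful combinatorial argument juggling the transitivity of $E$ and the convexity axioms.
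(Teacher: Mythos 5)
Your proposal follows the same strategy as the paper's: the equivalence is split into two implications, both routed through Lemma~\ref{lemma:non-rosy-equals-coding}; Direction~1 (non-rosy gives non-collapse) is essentially the paper's Proposition~\ref{prop:char-rosy-sufficiency} (build an \EM-template from a coding witness and invoke the Modeling Property), and Direction~2 corresponds to Lemma~\ref{lemma:indisc-to-fs}. Your ``chain of qf-types'' reduction is the same idea as the Claim in that lemma's proof, implemented there via the $g \mapsto g^*$ operation and a path $g_0,\ldots,g_N$, and you correctly flag it as the technically delicate step.

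There is, however, a concrete gap in your last step. You propose $\phi(x;y_0,y_1) := \psi_0(x;y_0,y_1) \lor \psi_0(x;y_1,y_0)$ and claim $\phi(Ix_i;Iu_f,Iu_g)\iff f(i)=g(i)$. But $T_\fcn$-indiscernibility only pins down $\psi_0$ on tuples that fit the qf-type isolated by your reduction, which in particular has $y_0 <_\SF y_1$; it tells you nothing a priori about the ``reversed'' pairs $\psi_0(Ix;Iv,Iu)$ with $u <_\SF v$. Let $v_{>,=}$ and $v_{>,\neq}$ denote the (indiscernibility-constant) truth values of $\psi_0$ on a reversed pair when $E$ holds, resp.\ fails. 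If $v_{>,\neq}=\textnormal{True}$ then for $f<_{\textnormal{lex}} g$ with $f(i)\neq g(i)$ you get $\phi(Ix_i;Iu_f,Iu_g) = \textnormal{False}\lor v_{>,\neq} = \textnormal{True}$, breaking the coding. Worse, in the case $(v_{>,=},v_{>,\neq})=(\textnormal{False},\textnormal{True})$ no boolean combination of $\psi_0(x;y_0,y_1)$ and $\psi_0(x;y_1,y_0)$ can distinguish $E$ from $\neg E$, since the unordered pair of truth values is $\{\textnormal{True},\textnormal{False}\}$ in both cases. The paper does not symmetrize at all: it keeps the asymmetric $\phi^\circ$, fixing the positions $s_0<t_0$, and leaves the verification that $\phi^\circ$ codes equivalence relations as ``not difficult to verify (following \cite{ScowNIP}).'' To close the gap you would need either to weaken ``codes equivalence relations'' to ordered (say lex-increasing) pairs and re-examine the use made of it in Lemma~\ref{lemma:non-rosy-equals-coding}, or to supply a further argument constraining the reversed-pair behavior of $\psi_0$.
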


%% Does this require $T$ to have e.i.?!  This seems to be needed for Lemma 4.5. (Vince: 7 Jan 2015)
%% Added the assumption that $T$ eliminates imaginaries (Vince: 17 April 2015)

%%%%%%%%%%%%%%%%%%%%%%%%%%%%%%%%%%%%%%%%%%%%%
%%%%%%%%%%%%%%%%%%%%%%%%%%%%%%%%%%%%%%%%%%%%%
\subsection{Proof of Theorem \ref{thm:collapse-result-rosiness}}

The proof of Theorem \ref{thm:collapse-result-rosiness} amounts to the conjunction of Propositions \ref{prop:char-rosy-necessity} and \ref{prop:char-rosy-sufficiency}, which we prove in turn.
Throughout this subsection, assume that $T$ eliminates imaginaries.

%%%%%-----------------------------------------------------------------------------------------------------%%%%%
\begin{prop}\label{prop:char-rosy-necessity}
Let $T$ be a theory with big model $\UU$. If $T$ is rosy, then every indiscernible $T_\fcn$-picture in $\UU$ is  an indiscernible $T_2$-picture via $\SX_0 = \SX$, $\SX_1 = \SF$.
\end{prop}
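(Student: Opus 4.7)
The plan is to argue by contrapositive: starting from an indiscernible $T_\fcn$-picture $I:\A\to\UU$ that is \emph{not} an indiscernible $T_2$-picture (via $\SX_0=\SX$, $\SX_1=\SF$), we will construct a formula that codes equivalence relations in $\UU$ and then invoke Lemma~\ref{lemma:non-rosy-equals-coding} to conclude that $T$ is not rosy, contradicting hypothesis. The failure of $T_2$-collapse yields tuples $\aa,\bb$ in $\A$ of the same $T_2$-type but different $T_\fcn$-types, together with an $\L$-formula $\psi$ satisfying $\UU\models\psi(I\aa)\wedge\neg\psi(I\bb)$. Since the $T_\fcn$-type over a fixed $T_2$-type is determined by the atomic $E$-facts among the $\SX\times\SF\times\SF$-triples in the tuple, the discrepancy must lie in at least one such $E$-atom.

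Next, by a flipping argument---using the $\aleph_0$-categoricity of $T_\fcn^*$ and, where needed, padding the tuples with auxiliary $\SX$- or $\SF$-elements to preserve $T_\fcn$-consistency at each intermediate stage---we reduce to tuples $\aa',\bb'$ that share a common context $\overline{d}$ and differ on exactly one $E$-atom $E(c^\SX,c^\SF_0,c^\SF_1)$ (holding in $\aa'$, failing in $\bb'$), still separated by some $\L$-formula $\psi'$. We then set
\[
\theta(x;u_0,u_1)\;:=\;\psi'(I\overline{d};\,x,u_0,u_1)\;\vee\;\psi'(I\overline{d};\,x,u_1,u_0),
\]
a formula over parameters $I\overline{d}$, symmetrized in $u_0,u_1$ to accommodate either $<_\SF$-order. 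By indiscernibility of $I$, for every triple $(c,u,u')$ in $\A$ whose $T_\fcn$-type over $\overline{d}$ matches that of $(c^\SX,c^\SF_0,c^\SF_1)$ in $\aa'$ up to the distinguished $E$-atom, $\UU\models\theta(Ic;Iu,Iu')$ if and only if $E^\A(c,u,u')$. For each $n<\omega$, invoking the Shelah-tree-like structure of the generic model of $T_\fcn^*$, we realize in $\A$ elements $(a_i)_{i<n}\subset\SX$ and $(b_f)_{f\in{}^n n}\subset\SF$ extending $\overline{d}$ with $E^\A(a_i,b_{f_0},b_{f_1})$ iff $f_0(i)=f_1(i)$. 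It then follows that $\UU\models\theta(Ia_i;Ib_{f_0},Ib_{f_1})$ iff $f_0(i)=f_1(i)$, so $\theta$ codes equivalence relations in $\UU$.

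The main obstacle will be that atomic $E$-facts in $T_\fcn^*$ are not freely assignable: transitivity of each $E_x$ together with the convexity axioms jointly constrain which $E$-patterns are jointly realizable. Consequently, both the one-atom-at-a-time minimization and the coherent Shelah-tree realization extending $\overline{d}$ will require careful consistency checks, handled in practice either by enlarging tuples with fresh $\SX$- or $\SF$-witnesses so that intermediate configurations remain $T_\fcn$-consistent, or by first embedding the intended coding setup into a canonical fragment of the Shelah tree and then aligning the minimal witness with this fragment. Once this realizability is in hand, the rest of the argument is a routine application of indiscernibility together with Lemma~\ref{lemma:non-rosy-equals-coding}.
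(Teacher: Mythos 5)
Your overall strategy---argue by contrapositive, extract from a non-collapsing $T_\fcn$-picture a formula that codes equivalence relations, then invoke Lemma~\ref{lemma:non-rosy-equals-coding}---is exactly the route the paper takes; the extraction step is packaged there as Lemma~\ref{lemma:indisc-to-fs}. However, the pivotal reduction in your sketch is not achievable as stated: one cannot in general pass to tuples $\aa',\bb'$ of the same $T_2$-type over a common context that differ on \emph{exactly one} $E$-atom. Transitivity of each $E_x$ forbids it. For instance, with a single $\SX$-point $a$ and three $\SF$-points $b_0<_\SF b_1<_\SF b_2$, no quantifier-free $T_\fcn$-type can assert $E(a,b_0,b_1)$ and $E(a,b_1,b_2)$ while denying $E(a,b_0,b_2)$; so a step from ``all three $E_a$-related'' to ``$b_2$ isolated'' necessarily flips two atoms simultaneously, with no $T_\fcn$-consistent type in between. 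Padding with fresh $\SX$- or $\SF$-witnesses does not help here---the obstruction is intrinsic to the atoms already present among the named points, not something that can be diluted by adding more.

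The paper's Claim inside Lemma~\ref{lemma:indisc-to-fs} identifies the correct minimal step: one isolates a single $\SF$-coordinate $b_{t_0}$ from its $E_{a_{i_0}}$-class (passing from $|b_{t_0}/E_{a_{i_0}}|\ge 2$ to $|b'_{t_0}/E_{a_{i_0}}|=1$), which may change several atoms at once but always lands on a $T_\fcn$-consistent type. This is established by a path argument on the set $G$ of functions encoding quantifier-free types over a fixed $\SX$-part: the basic move $g\mapsto g^*$ isolates the $<_\SF$-maximal non-isolated point with respect to the $<_\SX$-maximal relevant relation, every $g\in G$ is connected by such moves to the discrete type $h$, and indiscernibility of $I$ turns $\psi$ into a two-colouring of this path structure, forcing a distinguished edge. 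Once you replace ``flip one atom'' by ``isolate one point from one class'' as the minimal discrepancy, the rest of your outline---pin down all but one $\SF$-coordinate (the paper's Observation), substitute the $I$-image of the fixed parameters to get a formula $\phi^\circ$ in the remaining $\SX$- and $\SF$-variables, and realize the $(a_i)_{i<n}$, $(b_f)_{f\in{}^nn}$ configuration inside the generic model---matches the paper and goes through, so the gap is localized to that reduction step. (Your symmetrization of $\theta$ is a harmless variant; the paper instead splits into the two cases $s_0<t_0$ and $s_0>t_0$.)
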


Before proving this proposition, we will need to establish the following lemma.

%%%%%-----------------------------------------------------------------------------------------------------%%%%%
\begin{lemma}\label{lemma:indisc-to-fs}
Let $T$ be a theory with big model $\UU$. If there is an indiscernible $T_\fcn$-picture in $\UU$ that is not an indiscernible $T_2$-picture via $\SX_0 = \SX$, $\SX_1 = \SF$, then there is a formula $\phi(x;y,y')$ that codes equivalence relations in $\UU$.
\end{lemma}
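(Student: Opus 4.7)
The plan is to extract, from the failure of $T_2$-indiscernibility, an $\L$-formula that detects a single flip in the $E$-pattern, and then to exploit the richness of $\A$ to realize a coding configuration on which this formula codes equivalence relations. First, by hypothesis there exist tuples $\bar a^1,\bar a^2\in A$ with $\qtp_{\FF_2}^\A(\bar a^1)=\qtp_{\FF_2}^\A(\bar a^2)$ but $\tp^\UU(I\bar a^1)\neq\tp^\UU(I\bar a^2)$, witnessed by some $\L$-formula $\psi$. Since $I$ is a $T_\fcn$-indiscernible picture, $\qtp_{\FF_\fcn}^\A(\bar a^1)\neq\qtp_{\FF_\fcn}^\A(\bar a^2)$; the discrepancy, being invisible to the $T_2$-data, must live entirely in the $E$-relations. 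I would then reduce to the case where these two $\FF_\fcn$-types differ in exactly one $E$-instance. This is done by considering the finite set of valid $\FF_\fcn$-types extending the common $T_2$-restriction (each realizable in $\A$ by genericity) and threading a path between $\qtp(\bar a^1)$ and $\qtp(\bar a^2)$ through single-$E$-flip moves that remain valid under the convexity axiom; the $\UU$-type must change on some adjacent pair, which then gives tuples $\bar c,\bar d\in A$ with identical $<$-structure, differing only in $E(x_\star,u_0^\star,u_1^\star)$, distinguished by some $\L$-formula $\chi$.

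Next, I set $\phi(x;y_0,y_1) := \chi(\dots)$ by substituting $x,y_0,y_1$ for $x_\star,u_0^\star,u_1^\star$ in $\chi$ and leaving the remaining positions filled with the $I$-images of the other elements of $\bar c$; these context parameters are allowed by the coding definition, as in the proof of Lemma~\ref{lemma:non-rosy-equals-coding}. To realize the coding for a given $n<\omega$, use the richness of $\A\models T_\fcn^*$ to find $a_0<_\SX\cdots<_\SX a_{n-1}$ in the $<_\SX$-interval of $\SX^\A$ occupied by $x_\star$ with respect to the $\SX$-context, and $\{b_f:f\in{}^nn\}\subseteq\SF^\A$ ordered lexicographically by $(f(0),\ldots,f(n-1))$ into the $<_\SF$-interval occupied by $u_0^\star,u_1^\star$ with respect to the $\SF$-context; I require each $a_i$ to replicate the $E$-relations of $x_\star$ with the $\SF$-context, each $b_f$ to replicate the common $E$-relations of $u_0^\star$ and $u_1^\star$ with the $\SX$-context, and $E(a_i,b_{f_0},b_{f_1})\iff f_0(i)=f_1(i)$; convexity is automatic from the lex ordering, so this pattern is a consistent $\FF_\fcn$-type realized in $\A$. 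Consequently, the $\FF_\fcn$-type of $(a_i,\mathrm{ctx}^\SX,b_{f_0},b_{f_1},\mathrm{ctx}^\SF)$ is $\qtp(\bar c)$ when $f_0(i)=f_1(i)$ and $\qtp(\bar d)$ otherwise, and $T_\fcn$-indiscernibility of $I$ transfers this into $\UU$, yielding $\phi(Ia_i;Ib_{f_0},Ib_{f_1})\iff f_0(i)=f_1(i)$.

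The main obstacle I anticipate is the single-flip reduction: the set of valid $\FF_\fcn$-types over a fixed $T_2$-restriction need not be manifestly path-connected under single $E$-flips, because toggling one $E$-relation in isolation can break the convexity axiom or transitivity of some $E_x$. One way to navigate this is to route the interpolating path through progressively coarser intermediate types (where many classes have been merged, and a single flip amounts to merging a pair), or else to abandon the reduction and carry a multi-flip witness through a more elaborate coding construction that builds each pattern variation directly. A secondary wrinkle is making $u_0^\star$ and $u_1^\star$ interchangeable with respect to the context so that each $b_f$ can play either role; this can be addressed by symmetrizing $\phi$ in $y_0,y_1$ (using $E(x,u,u')\iff E(x,u',u)$), or by interpreting $y_0,y_1$ as tuple-valued variables holding both roles simultaneously.
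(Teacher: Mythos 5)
Your proposal takes essentially the same route as the paper: locate a distinguishing $\L$-formula, reduce along a path of local moves through the (finite) space of admissible quantifier-free types extending the common $T_2$-data to an adjacent pair on which the $\UU$-type changes, then substitute one $\SX$-variable and two $\SF$-variables (freezing the rest as $I$-image parameters) to obtain the coding formula. You correctly flag the crux — path-connectedness under single $E$-atom flips may fail because of convexity and transitivity — and the paper's resolution is a canonical ``peel off a singleton'' operation $g\mapsto g^*$ (isolate the $<_\SF$-last non-isolated element at the $<_\SX$-last level where it is non-isolated), which drives every admissible type to the all-singleton type and hence connects any two; the adjacent pair it produces differs in the whole set of atoms $E(a_{i_0},b_{t_0},\cdot)$ at one level rather than a single atom, a granularity your ``multi-flip witness'' fallback anticipates, while your ``progressively coarser'' alternative has the direction reversed (the paper refines toward all-singletons rather than coarsening).
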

\begin{proof}
Let $I:\A\to \UU$ be an indiscernible picture of the countable model $\A\models T_\fcn^*$, and suppose $I$ is not an indiscernible $T_2$-picture. In particular, there are $0<n_\SX,n_\SF<\omega$, $a_0<\cdots<a_{n_\SX-1}$ and $a'_0<\cdots<a'_{n_\SX-1}$ in $\SX^\A$, $b_0<\cdots<b_{n_\SF-1}$ and $b'_0<\cdots<b'_{n_\SF-1}$ in $\SF^\A$, and a formula $\phi(x;y)$ in the language of $T$ such that:
\begin{itemize}
\item $x=(x_0,...,x_{n_\SX-1})$ and $y = (y_0,...,y_{n_\SF-1})$
\item $\qtp^\A(a,b)\neq\qtp^\A(a',b')$
\item $\UU\models\phi(Ia,Ib)$ and $\UU\models\neg \phi(Ia',Ib')$
\end{itemize}
By model-completeness and the indiscernibility of $I$, we may assume that $a'=a$. We will also take the lemma as obvious in the case that $n_\SF=1$ (using $\phi(x,y)\wedge\phi(x,y')$ to code equivalence relations), so in all that follows, we assume $n_\SF\geq 2$.

\begin{claim}
We may assume that $p_b(a,y) = \qtp^\A(a,b)$ and $p_{b'}(a,y)=\qtp^\A(a,b')$ differ only in the following way: % There are $i_0<n_\SX$, $t_0<n_\SF$, and $W\subseteq n_\SF\setminus\{t_0\}$ such that for all $i<n_\SX$, $s,t<n_\SF$,  %% There seems to be no mention of this "W", so I removed it (Vince: 30 April 2015)
There are $i_0<n_\SX$ and $t_0<n_\SF$ such that for all $i<n_\SX$, $s,t<n_\SF$,
\begin{align*}
i\neq i_0\,\,&\implies\,\,\A\models (E(a_i,b_s,b_t)\bic E(a_i,b'_s,b'_t))\\
t_0\notin\{s,t\}\,\,&\implies\,\,\A\models (E(a_i,b_s,b_t)\bic E(a_i,b'_s,b'_t)),\\
|b_{t_0}/E_{a_{i_0}}|\geq 2\,\,&\wedge\,\, |b'_{t_0}/E_{a_{i_0}}|=1.
%i=i_0\wedge t_0\in\{s,t\}\,\,&\implies\,\,\A\models E(a_i,b_s,b_t)\wedge\neg E(a_i,b'_s,b'_t)
\end{align*}
\end{claim}
\begin{proof}[Proof of claim]
Let $G$ be the set of all functions $g:n_\SX{\times} n_\SF{\times} n_\SF\to 2$ satisfying the following conditions:
\begin{enumerate}
\item For all $i<n_\SX$ and $t_0,t_1,t_2<n_\SF$, 
$$t_0=t_1\implies g(i,t_0,t_1) = 1,\,\,g(i,t_0,t_1) = g(i,t_1,t_0),$$
$$g(i,t_0,t_1) = 1\wedge g(i,t_1,t_2)=1\implies g(i,t_0,t_2) = 1.$$
\item For all $k< n_\SX$ and $i_0<\cdots<i_{k-1}<i_k<n_\SX$, for all $t_0<n_\SF$,
$$\left\{t<n_\SX:\bigwedge_{j\leq k}g(i_j,t,t_0)=1\right\}$$
is a convex subset of 
$$\left\{t<n_\SX:\bigwedge_{j< k}g(i_j,t,t_0)=1\right\}.$$
\end{enumerate}
Then $G$ is precisely the set of functions $n_\SX{\times} n_\SF{\times} n_\SF\to 2$ that correspond naturally to quantifier-free-complete types $p(a,y)$ extending $\qtp^\A(a)$; we write $p\mapsto g_p$ and $g\mapsto p_g$ for the two directions of the bijection. Let's also name the function $h:n_\SX{\times} n_\SF{\times} n_\SF\to 2$ given by, 
$$h(i,t_0,t_1) = \begin{cases}
1 & \textnormal{ if $t_0=t_1$}\\
0 &\textnormal{ otherwise.}
\end{cases}$$It's clear that $h\in G$. For any $g\in G$, we define $g^*:n_\SX{\times} n_\SF{\times} n_\SF\to 2$ as follows:
\begin{itemize}
\item For $t<n_\SF$, we say that $t$ is \emph{$g$-isolated} if for all $i<n_\SX$, $g(i,t,t') = 1\implies t=t'$.
\item Set
$$\begin{aligned}
t^* &= \max\left\{t:\textnormal{$t$ is not $g$-isolated}\right\},\\
i^* &= \max\left\{i:(\exists t)\,t\neq t^*\wedge g(i,t,t^*)=1\right\}.
\end{aligned}$$
\item Finally, define $g^*$ by
$$g^*(i,t_0,t_1) = \begin{cases}
g(i,t_0,t_1) &\textnormal{ if $i\neq i^*$ or $t^*\notin\{t_0,t_1\}$}\\
1 & \textnormal{ if $i=i^*$ and $t_0=t_1=t^*$}\\
0 &\textnormal{ if $i=i^*$ and $\{t^*\}\subsetneq\{t_0,t_1\}$.}
\end{cases}$$
\end{itemize}
It is not hard to verify that $g^*$ remains in $G$, and it is equally clear that for each $g\in G$, there is a number $m<\omega$ such that if $g_0 = g$, and $g_{k+1}=g_k^*$ for each $k<m$, then $g_m = h$. Consequently, if $g_{p_b}$ and $g_{p_{b'}}$ are the members of $G$ corresponding to $p_b(a,y)$ and $p_{b'}(a,y)$, then there is a  ``path'' $g_0=g_{p_b},g_1,...,g_{N-1},g_N=g_{p_{b'}}$ such that for each $i<N$, either $g_{i+1} = g_i^*$ or $g_i = g_{i+1}^*$. Since $\UU\models \phi(Ia,Ib)$ and $\UU\models\neg\phi(Ia,Ib')$, by indiscernibility, there is an $i<N$ such that 
$$c\models p_{g_i}(a,y)\implies \UU\models\phi(Ia,Ic),\,\,c\models p_{g_{i+1}}(a,y) \implies \UU\models\neg\phi(Ia,Ic).$$
Then, replacing $p_b$ with $p_{g_i}$ and $p_{b'}$ with $p_{g_{i+1}}$ is sufficient for the claim.
\end{proof}

\begin{obs}
By indiscernibility and model-completeness, we may further assume that $b_j=b_j'$ whenever $j\in n_\SF\setminus\{t_0\}$.
\end{obs}
Finally, taking $s_0\in W\setminus \{t_0\}$ and
$\phi^\circ(x_{i_0};y_{s_0},y_{t_0}) =$ $$\phi(a_{0,...,i_0-1},x_{i_0},a_{i_0+1,...,n_\SX-1},b_{0,...,s_0-1},y_{s_0},b_{s_0+1,...,t_0-1},y_{t_0},b_{t_0+1,...,n_\SF-1})$$
(when $s_0<t_0$) it is not difficult to verify (following \cite{ScowNIP}) that $\phi^\circ(x;y,y')$ codes equivalence relations. (When $s_0>t_0$, we use a similar definition of $\phi^\circ$.)
\end{proof}

\begin{proof}[Proof of Proposition \ref{prop:char-rosy-necessity}]
Proving the contrapositive, if there is an indiscernible $T_\fcn$-picture in $\UU$ that is not an indiscernible $T_2$-picture via $\SX_0 = \SX$, $\SX_1 = \SF$, then by Lemma \ref{lemma:indisc-to-fs}, there is a formula $\phi(x;y,y')$ that codes function spaces in $\UU$. Then, by Lemma \ref{lemma:non-rosy-equals-coding}, the existence of a formula that codes equivalence relations in $\UU$ implies that $T$ is not rosy.
\end{proof}

%%%%%-----------------------------------------------------------------------------------------------------%%%%%
\begin{prop}\label{prop:char-rosy-sufficiency}
Let $T$ be a theory with big model $\UU$.
If every indiscernible $T_\fcn$-picture in $\UU$ is  an indiscernible $T_2$-picture via $\SX_0 = \SX$, $\SX_1 = \SF$, then $T$ is rosy.
\end{prop}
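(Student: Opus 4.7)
The plan is to prove the contrapositive: if $T$ is not rosy, then some indiscernible $T_\fcn$-picture in $\UU$ fails to be a $T_2$-picture via $\SX_0 = \SX$, $\SX_1 = \SF$. First, I would invoke Lemma \ref{lemma:non-rosy-equals-coding} to extract from non-rosiness a formula $\phi(x;y_0,y_1)$ that codes equivalence relations in $\UU$. Fixing the countable generic $\A \models T_\fcn^*$, each $u \in \SF^\A$ is uniquely encoded by its ``class function'' $f_u:\SX^\A \to J$ with $f_u(x) := [u]_{E_x}$, so that $f_u(x) = f_v(x)$ iff $E(x,u,v)$ in $\A$. A compactness argument built on the finite coding instances --- identifying $x \in \SX^\A$ with a numerical index and $u \in \SF^\A$ with the code $f_u$ --- would yield in $\UU$ witnesses $(a_x)_{x \in \SX^\A}$ and $(b_u)_{u \in \SF^\A}$ satisfying $\UU \models \phi(a_x;b_u,b_v) \iff E(x,u,v)$ in $\A$.

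With the initial map $I_0(x) := a_x$, $I_0(u) := b_u$ in hand, I would define the pointwise EM-template $\Gamma(\aa) := I_0(\aa)$ --- both EM-template axioms hold automatically --- and feed it to the Modeling Property of $(T_\fcn,\FF_\fcn)$ to obtain $\UU \preceq \N$ and an indiscernible picture $I:\A \to \N$ patterned on $\Gamma$; saturation of $\UU$ then lets us assume $I:\A \to \UU$. Because each patterning map $f^\Delta_X:\A \sto \A$ is an $\FF_\fcn$-embedding (hence preserves $E$) and $\Gamma$ is pointwise, the patterning condition propagates the correspondence to $\UU \models \phi(I(x);I(u),I(v)) \iff E(x,u,v)$ in $\A$ for all $x \in \SX^\A$ and $u,v \in \SF^\A$.

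Finally, to witness non-collapse, I would select $x_0 \in \SX^\A$ and $u_0 <_\SF u_1 <_\SF u_2$ in $\SF^\A$ with $E(x_0,u_0,u_1)$ and $\neg E(x_0,u_0,u_2)$, which exist in the generic model since the $E_{x_0}$-classes are nontrivially $<_\SF$-convex (place $u_0,u_1$ in one class and $u_2$ just past it). The tuples $(x_0,u_0,u_1)$ and $(x_0,u_0,u_2)$ share the same $T_2$-quantifier-free type --- same sorts, same $<_\SF$-order among the $\SF$-entries --- yet $\phi(I(x_0);I(u_0),I(u_1))$ holds in $\UU$ while $\phi(I(x_0);I(u_0),I(u_2))$ fails, so $I$ is not an indiscernible $T_2$-picture. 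The main obstacle will be the compactness step: organizing the coding witnesses so that their indexing by $\SX^\A \sqcup \SF^\A$ coherently mirrors the $E$-relation across all finite subconfigurations requires a careful uniform identification of each $u$ with its class function $f_u$.
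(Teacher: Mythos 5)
Your proof is correct and takes essentially the same route as the paper's: contrapositive, invoke Lemma \ref{lemma:non-rosy-equals-coding} to get a formula coding equivalence relations, use compactness to upgrade the finite coding witnesses to an $\omega \times {}^\omega\omega$-indexed family, and then identify $\SX^\A$ and $\SF^\A$ inside that family (the paper's injections $u:\SX^\A\to\omega$, $v:\SF^\A\to{}^\omega\omega$ play precisely the role of your ``class function'' encoding) before feeding the resulting pointwise map to the Modeling Property to produce the non-collapsing picture. The only difference is cosmetic: you spell out the final non-collapse witnesses $(x_0,u_0,u_1)$ vs.\ $(x_0,u_0,u_2)$ explicitly and justify that the patterning maps preserve $E$, whereas the paper takes these as obvious.
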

\begin{proof}
For the contrapositive, suppose $T$ is not rosy. By Lemma \ref{lemma:non-rosy-equals-coding}, there is a formula $\phi(x;y,y')$ that codes function spaces in $\UU$. We will use this fact to construct an indiscernible $T_\fcn$-picture in $\UU$ that is not  an indiscernible $T_2$-picture via $\SX_0 = \SX$, $\SX_1 = \SF$.

In this paragraph, we collect the data necessary to devise an \EM-template.
% Vince removed this : 25 August 2015 (it seems unnecessary).
%For each $0<n<\omega$, let $(c_i^{(n)})_{i<n}$, $(d_f^{(n)})_{f:n\to n}$ be such that 
%$$\models\phi(c_i^{(n)};d_{f_0}^{(n)},d_{f_1}^{(n)})\iff f_0(i)=f_1(i)$$
%for all $i<n$ and $f_0,f_1:n\to n$.
By compactness, there are $(c_i)_{i<\omega}$ and $(d_f)_{f \in {^\omega}\omega}$ in $\UU$ such that %% To make notation consistent as before, I replaced "f:\omega\to\omega" with "f\in{^\omega}\omega". (Vince: 30 April 2015)
$$\models\phi(c_i;d_{f_0},d_{f_1})\iff f_0(i)=f_1(i)$$
for all $i<\omega$ and $f_0,f_1:\omega\to \omega$.
 Let $\A\models T_\fcn^*$ be the generic model, and let $u:\SX^\A\to\omega$ and $v:\SF^\A\to{^\omega}\omega$ be injective functions such that 
$$\A\models E(a,b_0,b_1)\iff\UU\models\phi(c_{u(a)};d_{v(b_0)},d_{v(b_1)})$$
for all $a\in\SX^\A$ and $b_0,b_1\in\SF^\A$. From these, we define our \EM-template by setting
$$\Gamma(\aa\concat\aa') = (c_{u(a_i)})_{i<m}\concat(d_{v(a'_j)})_{j<n}$$
whenever $\aa\in (\SX^\A)^m$ and $\aa'\in (\SF^\A)^n$.

By the Modeling Property of $(T_\fcn,\FF_\fcn)$, we recover an indiscernible picture $I:\A\to\UU$ patterned on $\Gamma$, and we take it as obvious that $I$ is not an indiscernible $T_2$-picture via $\SX_0 = \SX$, $\SX_1 = \SF$.
\end{proof}

\section{NTP2 and Convexly-ordered Equivalence Relations}\label{Sect_NTP2}

%%%%%-----------------------------------------------------------------------------------------------------%%%%%
\begin{defn}[$k$-TP2]
Let $T$ be a theory with big model $\UU$, and let $\phi(x;y)$ be a partitioned formula of the language $\L$ of $T$. For $2\leq k<\omega$, $\phi(x;y)$ is said to have the {\em $k$-tree property of the second kind} ($k$-TP2) in $T$ if there is a family $(b^i_j)_{i,j\in\omega}$ in $\UU$ such that:
\begin{itemize}
\item For every function $f:\omega\to\omega$, $\left\{\phi(x,b^i_{f(i)})\right\}_{i<\omega}$ is consistent.
\item For every $i<\omega$, $\left\{\phi(x,b^i_j)\right\}_{j<\omega}$ is $k$-inconsistent.
\end{itemize}
$T$ is said to have TP2 if there are a formula $\phi(x;y)$ and  $2\leq k<\omega$ such that $\phi$ has $k$-TP2 in $T$; otherwise, we say that $T$ is NTP2.
\end{defn}

\begin{fact}
 (Theorem 0.2 of \cite{Shelah93}) The following are equivalent:
\begin{itemize}
\item $T$ has TP2.
\item Some formula $\phi(x;y)$ has 2-TP2 in $T$.
\end{itemize}
\end{fact}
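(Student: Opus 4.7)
The reverse direction is immediate: $2$-TP2 is a special case of $k$-TP2, so a $2$-TP2 formula witnesses that $T$ has TP2. The content is the forward implication, and I would prove it by induction on $k\geq 2$, reducing $k$-TP2 (for $k\geq 3$) to $(k-1)$-TP2 of a related formula; iterating down to $k=2$ then produces the desired $2$-TP2 formula.

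Assume $\phi(x;y)$ has $k$-TP2, witnessed by $(b^i_j)_{i,j<\omega}$. First, I would extract $(b^i_j)$ to a mutually indiscernible array still witnessing $k$-TP2 for $\phi$, by applying the Modeling Property from Example \ref{example_mutuallyindiscernible} (for each finite number of sorts, then compactness). Both the existential ``path consistency'' clause and the universal ``row $k$-inconsistency'' clause transfer through indiscernible patterning. I would then define
\[
\psi(x;z_0,z_1) := \phi(x;z_0)\wedge\phi(x;z_1),\qquad c^i_j := (b^i_{2j},b^i_{2j+1}),
\]
and argue that $(c^i_j)_{i,j<\omega}$ witnesses $(k-1)$-TP2 for $\psi$.

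Row $(k-1)$-inconsistency is immediate: for distinct $j_0,\ldots,j_{k-2}$, the conjunction $\bigwedge_{s<k-1}\psi(x;c^i_{j_s})$ unpacks to $2(k-1)\geq k$ formulas $\phi(x;b^i_\ell)$ at distinct column indices of row $i$, hence is inconsistent by the row $k$-inconsistency of $\phi$. For path consistency, mutual indiscernibility of rows (together with indiscernibility within each row) implies that for any $f:\omega\to\omega$ the set $\{\psi(x;c^i_{f(i)})\}_i$ has the same complete type over $\emptyset$ as $\{\phi(x;b^i_0)\wedge\phi(x;b^i_1)\}_i$. So the verification reduces to showing that this latter ``width-$2$ path'' is consistent.

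The main obstacle is precisely this wide-path consistency: the given path-consistency for $\phi$ yields $\{\phi(x;b^i_0)\}_i$ and $\{\phi(x;b^i_1)\}_i$ consistent separately, but we need a common realizer. I would address this by refining the extraction step: begin with a very wide array (compactness increases the column set arbitrarily), and then apply a Ramsey-style refinement---colouring finite tuples of columns by the partial-consistency information they impose on $x$---to produce an array in which, additionally, for every $r<k$ and every columns $\ell_0<\cdots<\ell_{r-1}$, the set $\{\phi(x;b^i_{\ell_m}) : i<\omega,\, m<r\}$ is consistent. The heuristic is that $k$-inconsistency constrains only sets of size $\geq k$ within a row, so for $r<k$ the relevant finite consistency is realizable row-by-row; mutual indiscernibility plus pigeonhole then globalizes it. Once this refined extraction is in hand, the path-consistency clause for $\psi$ follows from its $r=2$ instance, the induction proceeds, and after $k-2$ stages we obtain a formula with $2$-TP2.
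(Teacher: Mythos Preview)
The paper gives no proof of this statement; it is simply quoted as a fact from \cite{Shelah93}, so there is nothing in the paper to compare your attempt against.

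On its own merits, your reduction scheme --- pass from $\phi$ with $k$-TP2 to $\psi=\phi\wedge\phi$ with $(k{-}1)$-TP2 via $c^i_j=(b^i_{2j},b^i_{2j+1})$, and iterate --- is the standard one, and you correctly isolate the only real difficulty: arranging that width-$2$ paths are consistent in the extracted array. But the justification you offer for that step does not go through. Row $k$-inconsistency says nothing whatsoever about the consistency of fewer than $k$ instances drawn from a single row, so ``realizable row-by-row'' is not available; and even if each single-row pair $\{\phi(x;b^i_0),\phi(x;b^i_1)\}$ happened to be consistent, mutual indiscernibility of the rows does not manufacture a common realizer for the full width-$2$ path $\{\phi(x;b^i_0),\phi(x;b^i_1):i<\omega\}$ --- indiscernibility transfers types of the parameters, not realizations of the $\phi$-type. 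Your Ramsey colouring only homogenizes the answer (either all width-$2$ paths are consistent or none are); you still owe an argument ruling out the ``none'' branch, and that argument is precisely the content of the theorem. As written, the sketch has a genuine gap at exactly the point you yourself flag as the main obstacle.
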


%%%%%%%%%%%%%%%%%%%%%%%%%%%%%%%%%%%%%%%%%%%%%
%%%%%%%%%%%%%%%%%%%%%%%%%%%%%%%%%%%%%%%%%%%%%
\subsection{$\ceq$-Indiscernibles}

 We will argue independently for why $T_\ceq^*$ is a theory of indiscernibles.  By \cite{KPT}, the finite models of $T_\ceq$ form a Ramsey class, and so by \cite{ScowNIP}, $T_\ceq^*$ is a theory of indiscernibles.  These indiscernibles have already been widely used in the TP2 context, typically as \emph{array indiscernibles} (as described in \cite{KimKimScow}) where the language is composed of two linear orderings.

%%%%%%%%%%%%%%%%%%%%%%%%%%%%%%%%%%%%%%%%%%%%%
%%%%%%%%%%%%%%%%%%%%%%%%%%%%%%%%%%%%%%%%%%%%%
\subsection{Characterizing NTP2}

We follow Claim 2.19 in \cite{Shelah500} in the sense that the absence of TP2 in a theory is characterized by dynamics in the EM-type of indiscernible sequences.  Namely, if the indiscernible sequence allows for a certain kind of consistency, it must then allow for more.
Since we are tracking the dynamics of formulas asserting consistency, as they show up in the EM-type, it is helpful to make use of a notation from \cite{Malliaris}:

%%%%%-----------------------------------------------------------------------------------------------------%%%%%
\begin{defn}
The characteristic sequence associated to a formula $\varphi(x;y)$ is defined to be

$P_n(y_0,\ldots,y_{n-1}) \equiv \exists x (\bigwedge_{i < n} \varphi(x;y_i))$

\ms

We will write $P^{\varphi}_n(y_0, \ldots, y_{n-1})$ to denote dependence on $\varphi$.
\end{defn}

%%%%%-----------------------------------------------------------------------------------------------------%%%%%
\begin{defn}[Coding equivalence]
Let $\A\models T_\ceq^*$ be a countable model, and let $I:\A\to\UU^y$ be an indiscernible picture. For a formula $\phi(x;y)\in\L(\UU)$, we say that $\phi$ {\em codes equivalence on $I$} if, for all $a_0, a_1 \in A$ with $a_0 <^\A a_1$,
\[
 E^\A(a_0, a_1) \text{ if and only if } \models \neg \phi(Ia_0; Ia_1).
\]
\end{defn}

%%%%%-----------------------------------------------------------------------------------------------------%%%%%
\begin{lemma}\label{Lemma_strictpat_NTP2}
Let $T$ be a theory with big model $\UU$. Let $I:\A\to\UU$ be an indiscernible picture of $\A\models T_\ceq^*$. Then $I$ does not collapse to order if and only if there exist a formula $\phi(x;y)\in\L(\UU)$ and an embedding $f:\A\to\A$ such that $\phi$ codes equivalence on $I \circ f$.
\end{lemma}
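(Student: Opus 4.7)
The plan is to prove the biconditional by tackling the two directions separately. For the reverse direction, suppose $\phi(x;y) \in \L(\UU)$ and $f \in \emb(\A, \A)$ witness that $\phi$ codes equivalence on $I \circ f$. I would pick $p_0 <^\A p_1$ with $E^\A(p_0, p_1)$ and $q_0 <^\A q_1$ with $\neg E^\A(q_0, q_1)$, both available in the countable model $\A$ of $T_\ceq^*$. The pairs $(f(p_0), f(p_1))$ and $(f(q_0), f(q_1))$ share a $<$-order type, while the coding hypothesis forces their $I$-images to have opposite $\phi$-status and hence distinct $\UU$-types; collapse of $I$ to order would descend through $f$ and forbid this, so $I$ does not collapse.

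For the forward direction, my plan is to extract a coding formula from any failure of collapse. Non-collapse yields $k \geq 2$, tuples $\bar a, \bar b \in A^k$ of common $<$-order type with $\qtp^\A(\bar a) \neq \qtp^\A(\bar b)$ and $\tp^\UU(I\bar a) \neq \tp^\UU(I\bar b)$, and a separating $\L$-formula $\psi(\bar y)$. After making both tuples strictly $<$-increasing, their qf-types are encoded by subsets $S_a, S_b \subseteq \{0, \ldots, k-2\}$ flagging consecutive-in-$<$ $E$-equivalences. I would first telescope along a one-bit-flip Hamming path from $S_a$ to $S_b$: by indiscernibility of $I$, some consecutive pair $S^j, S^{j+1}$ still yields distinct $\UU$-types while differing at a single position $i$, reducing us to the case $S_a \triangle S_b = \{i\}$.

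The crucial second step is to refine to an \emph{isolated} flip, $i-1, i+1 \notin S_a$, so that $a_i, a_{i+1}$ form a solitary block among the tuple positions. I would secure this by pre-padding the tuples with coordinates drawn from fresh, $<$-separated $E$-classes bracketing the flip, using the density and infinitude of convex classes in the generic model of $T_\ceq^*$ to break any wider block into a chain of strictly isolated one-bit moves before rerunning the Hamming-path telescoping on the padded patterns. With isolation in hand, the sub-tuples obtained by deleting positions $i, i+1$ share a common qf-type (pairs straddling the deletion fail $E$ in both $\bar a$ and $\bar b$ by isolation; pairs avoiding the flip are unaffected), so ultrahomogeneity of $\A$ supplies an automorphism aligning them: WLOG $a_j = b_j =: c_j$ for all $j \neq i, i+1$.

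To finish, I set
\[
\phi(x;y) \;:=\; \psi(Ic_0, \ldots, Ic_{i-1}, x, y, Ic_{i+2}, \ldots, Ic_{k-1}),
\]
and use ultrahomogeneity once more to pick $f \in \emb(\A, \A)$ with image inside $(c_{i-1}, c_{i+2})$ and $E$-disjoint from every $c_j$---isolation ensures that the available region in the interval is itself isomorphic to $\A$. For every $p <^\A p'$, the tuple $(c_0, \ldots, c_{i-1}, f(p), f(p'), c_{i+2}, \ldots, c_{k-1})$ realizes the qf-type of $\bar a$ when $E^\A(p, p')$ and that of $\bar b$ otherwise, so indiscernibility of $I$ gives $\UU \models \phi(If(p); If(p'))$ iff $E^\A(p, p')$; swapping $\phi$ for $\neg\phi$ produces the required coding. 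The hard part will be the isolation refinement: naive one-bit telescoping does not isolate the flipping pair, and the $<$-convexity of $E$-classes forbids inserting fresh classes into the interior of an existing block, so the padding has to be engineered before the telescoping, perhaps by first relocating the non-collapse witness into a generically-chosen interval of $\A$ before running the argument.
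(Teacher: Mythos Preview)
Your reverse direction and the overall shape of the forward direction (encode qf-types by $S\subseteq\{0,\dots,k-2\}$, telescope along one-bit flips, then freeze all but two coordinates) match the paper's approach. The genuine gap is exactly where you flag it: the isolation refinement. Your proposed fix---pre-pad with fresh $E$-classes and then rerun the telescoping using only ``isolated'' flips---cannot succeed in principle. An isolated flip at position $i$ (with $i-1,i+1\notin S$) either removes a singleton block $\{i\}$ from $S$ or creates one; it never alters a block of consecutive set bits of size $\geq 2$. Hence if $S_a$ contains such a block and $S_b$ has it shortened by one (which is precisely the situation after your first telescoping when $i\in S_a$ and $i-1\in S_a\cap S_b$), there is no path of isolated flips between them, padded or not. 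Convexity is the real obstruction you already noticed, and no relocation of the witness into a ``generic interval'' gets around the combinatorics of the bit-sets.

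The paper's solution replaces isolation by a minimality argument. It builds a graph on the space of increasing qf $n$-types whose edges are single-element ``detach from a neighbouring block'' moves, and it weights each edge by the size $m$ of the block from which the element is detached. Since the $\theta$-colouring is non-constant and the graph is connected, a colour-changing edge exists; take one with $m$ minimal. Minimality of $m$ then buys exactly what you wanted isolation to buy: any type agreeing with $p_1$ (resp.\ $p_2$) on the subtuple that \emph{omits the interior of the adjacent block} already has the same colour as $p_1$ (resp.\ $p_2$), because any discrepancy would produce a colour-changing edge of strictly smaller weight. This lets you parametrize positions $j{-}1,j$ freely (with the image of $f$ sitting $E$-apart from the fixed neighbours) and still control $\theta$ purely by whether the two inserted elements are $E$-related---without ever needing the adjacent block to be trivial.
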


\begin{proof}
 As before, one direction is clear, so we show the other direction.  Suppose $I$ does not collapse to order.  Then, there are $<^\A$-increasing $n$-tuples $\aa_1,\aa_2$ from $I$ such that $\qtp^\A(\aa_1) = \qtp^\A(\aa_2)$ but $I\aa_1, I\aa_2$ do not have the same complete type in $\UU$.  Thus for some formula $\theta(\xx)$ of the language of $\L$ of $T$, $\models \theta(I\aa_1)$ and $\models \neg \theta(I\aa_2)$.  Let $q_i = \qtp^\A(\aa_i)$, $i=1,2$.  In general, let $S^\ceq_n(\emptyset)$ be the set of all quantifier-free $n$-types $p(x_0, ..., x_{n-1})$ where
\begin{itemize}
 \item $p \vdash x_i < x_j$ for all $i < j < n$,
 \item $p \vdash E(x_i, x_i)$ for all $i < n$,
 \item if $p \vdash E(x_i,x_j)$, then $p \vdash E(x_j,x_i)$ for all $i < j < n$,
 \item if $p \vdash E(x_i,x_j) \wedge E(x_j,x_k)$ for some $i < j < k < n$, then $p \vdash E(x_i,x_k)$, and
 \item if $p \vdash E(x_i,x_j)$ for $i < k < j < n$, then $p \vdash E(x_i,x_k) \wedge E(x_k,x_j)$.
\end{itemize}
That is, $S^\ceq_n(\emptyset)$ are the set of all quantifier-free $n$-types realized by some $<^\A$-ascending sequence of $n$-tuples in $\A$.  Hence $q_1, q_2 \in S^\ceq_n(\emptyset)$.  Now form a graph $\G = (\V,\E)$ with $\V = S^\ceq_n(\emptyset)$ and, for $p_1, p_2 \in S^\ceq_n(\emptyset)$, $\{ p_1, p_2 \} \in \E$ if there exists $0 < j < n$ such that
\begin{itemize}
 \item $p_1(x_0,...,x_{j-1},x_{j+1}, ..., x_{n-1}) = p_2(x_0,...,x_{j-1},x_{j+1}, ..., x_{n-1})$,
 \item $p_1 \vdash \neg E(x_{j-1}, x_{j+1})$ (hence $p_2 \vdash \neg E(x_{j-1}, x_{j+1})$),
 \item $p_1 \vdash E(x_{j-1}, x_j)$ or $p_1 \vdash E(x_j, x_{j+1})$, and
 \item $p_2 \vdash \neg E(x_{j-1},x_j) \wedge \neg E(x_j, x_{j+1})$.
\end{itemize}
That is, $p_1$ thinks $x_j$ is $E$-related to either $x_{j-1}$ or $x_{j+1}$ (but not both, as they are $E$-unrelated) and $p_2$ thinks that $x_j$ is $E$-unrelated to $x_{j-1}$ and $x_{j+1}$.  To each edge $\{ p_1, p_2 \} \in \E$, we can associate a number $1 \le m < n$ of the size of the $E$-class in $p_2$ to which $p_1$ thinks $x_j$ is $E$-related, namely $m := \max \{ m' : m' \le j, p_2 \vdash E(x_{j-m'}, x_{j-1}) \}$ if $p_1 \vdash E(x_{j-1}, x_j)$ and $m := \max \{ m' : m' < n - j, p_2 \vdash E(x_{j+1}, x_{j+m'}) \}$ if $p_1 \vdash E(x_j, x_{j+1})$.  Call this $m(\{ p_1, p_2 \}) = m$.  The reader should check that $\G$ is connected (one can get between any two configurations by pulling apart or adding on a single element to an $E$-class iteratively).  Since $I$ is an indiscernible picture, it induces a vertex $2$-coloring of $\G$, namely $\eta : \V \rightarrow \{0, 1\}$ where $\eta(p) = 1$ if and only if, for some (equivalently any) realization $\aa \models p$ (with $\aa \in A^n$), $\models \theta(I\aa)$.  Hence, $\eta(q_1) = 1$ and $\eta(q_2) = 0$.  In particular, $\eta$ is non-constant.  Now choose an edge $\{ p_1, p_2 \} \in \E$ such that
\begin{itemize}
 \item $\eta(p_1) \neq \eta(p_2)$, and
 \item $m(\{ p_1, p_2 \})$ is minimal such.
\end{itemize}
Since $\G$ is connected and $\eta$ is non-constant, such an edge exists.  Let $j$ witness that $\{ p_1, p_2 \}$ is an edge, without loss suppose $p_1 \vdash E(x_{j-1}, x_j)$ (the argument for $p_1 \vdash E(x_j, x_{j+1})$ is similar), and let $m = m(\{ p_1, p_2\})$.  Without loss of generality, suppose $\eta(p_1) = 0$ and $\eta(p_2) = 1$ (by possibly swapping $\theta$ and $\neg \theta$).  Notice that, by minimal choice of $m(\{ p_1, p_2 \})$, we have in fact that, for all $p \in S^\ceq_n(\emptyset)$, if
\begin{equation}\label{Eq_pp2}
 p(x_0, ..., x_{j-m-1}, x_{j-1}, ..., x_{n-1}) = p_2(x_0, ..., x_{j-m-1}, x_{j-1}, ..., x_{n-1}),
\end{equation}
then $\eta(p) = \eta(p_2) = 1$ (otherwise, we could find a path from $p$ to $p_2$ maintaining this condition of equality, hence contradicting the minimality of $m$).  Similarly, for all $p \in S^\ceq_n(\emptyset)$, if
\begin{equation}\label{Eq_pp1}
 p(x_0, ..., x_{j-m-1}, x_{j-1}, ..., x_{n-1}) = p_1(x_0, ..., x_{j-m-1}, x_{j-1}, ..., x_{n-1}),
\end{equation}
then $\eta(p) = \eta(p_1) = 0$.
%%%%%%begin Lynn changes 05 Jan 2015
%{\color{red}Let 
%\[
%\pi(x_0, ..., x_{j-m-1},x_j, ...,x_{n-1}):=p_1(x_0, ..., x_{j-m-1},x_j, ...,x_{n-1}) (=p_2(x_0, ..., x_{j-m-1},x_j, ...,x_{n-1}))
%\]
%Then for all $p \in S^\ceq_n(\emptyset)$, if
%\[
% p \vdash \pi \cup \{\neg E(x_{j-m-1},x_{j-1}), \neg E(x_j,x_{j+1})\}
%\]
%then $\eta(p) = 1$ if and only if $p \vdash \neg E(x_{j-1}, x_j)$}
%%end Lynn changes 05 Jan 2015
Now, for some realization $\aa \models p_1$, let
\[
 \phi(x; y) := \theta( Ia_0, ..., Ia_{j-2}, x, y, Ia_{j+1}, ..., Ia_{n-1} ).
\]
By ultrahomogeneity, there exists an embedding $f : \A \to \A$ such that $a_{j-2} <^\A f(A) <^\A Ia_{j+1}$ and each element of $f(A)$ is $E^\A$ unrelated to both $a_{j-2}$ and $a_{j+1}$.  We claim $\phi$ codes equivalence on $I \circ f$, as desired.

%% NB: It is important that this is a_{j-2} and not a_{j-m-1}.  Otherwise, if m > 1, we would have to include more into the x or y variable.  Hopefully below clears up what I was trying to do (Vince: 7 Jan 2015).

%%%%%%begin Vince changes 7 Jan 2015
Fix $b_0 <^\A b_1$ from $f(A)$ and let $p(\xx) = \qtp^\A(a_0, ..., a_{j-2}, b_0, b_1, a_{j+1}, ..., a_{n-1})$.  In particular, since $\neg E^\A(b_1, a_{j+1})$ and $m \ge 1$, one of the following holds:
\begin{itemize}
 \item $E^\A(b_0, b_1)$, hence \eqref{Eq_pp1} holds, hence $\eta(p) = 1$, hence $\models \neg \varphi(Ib_0; Ib_1)$; or
 \item $\neg E^\A(b_0, b_1)$, hence \eqref{Eq_pp2} holds, hence $\eta(p) = 0$, hence $\models \varphi(Ib_0; Ib_1)$.
 \end{itemize}
%%%%%%end Vince changes 7 Jan 2015
Therefore, $\models \phi(Ib_0; Ib_1)$ if and only if $\neg E^\A(b_0, b_1)$, as desired.
\end{proof}

Recall the general definition of dividing across, given in Definition \ref{Defn_DivideAcross}.  For the specific case of convex equivalence relation indiscernibles, we define dividing vertically.  One could also define the corresponding notion of dividing laterally.

%%%%%-----------------------------------------------------------------------------------------------------%%%%%
\begin{defn}[Dividing vertically]
Let $\A\models T_\ceq^*$ be a countable model, and let $I:\A\to\UU^y$ be an indiscernible picture. For a formula $\phi(x;y)\in\L(\UU)$, we say that $\phi$ {\em divides vertically across $I$} if there exists an indiscernible sequence $\langle a_i \rangle_{i < \omega}$ on $\A$ with $\neg E^\A(a_0, a_1)$ such that
\[
 \{ \phi(x; Ia_i) \}_{i < \omega} \text{ is inconsistent}.
\]
\end{defn}

\begin{rem}[Coding equivalence implies does not divide vertically]
 Fix $\phi(x;y) \in \L(\UU)$.  If $\phi$ codes equivalence on $I$, then $\phi$ does not divide vertically across $I$.  This is because, for any finite set $B_0 \subseteq A$ of pairwise $E^\A$-unrelated elements, there exists an element $a \in A^n$ such that $a <^\A b$ and $\neg E^\A(a,b)$ for all $b \in B_0$.  Therefore, $\models \phi(Ia; Ib)$ for all $b \in B_0$.  By compactness, for any $B \subseteq A$ of pairwise $E^\A$-unrelated elements, $\{ \phi(x; Ib) \}_{b \in B}$ is consistent.
\end{rem}

%%%%%-----------------------------------------------------------------------------------------------------%%%%%
\begin{thm}\label{Thm_NTP2}
Let $T$ be a theory with big model $\UU$. The following are equivalent:
\begin{enumerate}
\item $T$ is NTP2.
\item For every indiscernible $T_\ceq$-picture $I:\A\to\UU$, for every formula $\phi(x;y)\in\L(\UU)$, if $\phi(x,y)$ does not divide vertically across $I$, then $\phi(x;y)$ does not divide across $I$.
\end{enumerate}
\end{thm}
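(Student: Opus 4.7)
My plan is to prove both directions by contrapositive, exploiting the observation that in $T_\ceq^*$ every indiscernible sequence is either \emph{horizontal} (all elements in a common $E^\A$-class) or \emph{vertical} (pairwise $E^\A$-inequivalent), and that these two shapes correspond respectively to rows and paths of a potential TP2-array.

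For $(2)\Rightarrow(1)$, I would fix $\phi(x;y)$ with $2$-TP2, witnessed by $\langle b^i_j\rangle_{i,j<\omega}$. Using the modeling property for mutually indiscernible sequences (Example \ref{example_mutuallyindiscernible}) and compactness, replace this witness by $\langle b^i_j\rangle_{i,j\in\mathbb{Q}}$ whose rows remain $2$-inconsistent, whose paths remain consistent, and whose rows are mutually indiscernible. Fix an isomorphism $u:\A\to(\mathbb{Q}\times\mathbb{Q},\lx, E)$, where $\A$ is the countable model of $T_\ceq^*$ and $E$ identifies pairs with a common first coordinate, and define an \EM-template $\Gamma(a_0,\ldots,a_{n-1})=(b_{u(a_0)},\ldots,b_{u(a_{n-1})})$; the \EM-template axioms follow from the mutual indiscernibility. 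The modeling property of $(T_\ceq,\FF)$ yields an indiscernible picture $I:\A\to\UU$ patterned on $\Gamma$. A horizontal indiscernible sequence in $\A$, lying in a single $E^\A$-class, is then sent by $I$ to a tuple with the EM-type of a row, so $\{\phi(x;Ia_i)\}_i$ is $2$-inconsistent and $\phi$ divides across $I$; a vertical indiscernible sequence is sent to the EM-type of a path, which is consistent, so $\phi$ does not divide vertically across $I$. Hence (2) fails.

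For $(1)\Rightarrow(2)$, suppose $I:\A\to\UU$ and $\phi(x;y)$ violate (2): $\phi$ divides across $I$, witnessed by some indiscernible $\langle a_i\rangle_{i<\omega}$, but does not divide vertically across $I$. By indiscernibility, $\qtp^\A(a_i,a_j)$ for $i<j$ is a single $2$-type, so either every pair is $E^\A$-related or no distinct pair is; the latter vertical case is ruled out by the non-dividing hypothesis, so $\langle a_i\rangle$ is horizontal and $\{\phi(x;Ia_i)\}_i$ is $m$-inconsistent for some $m$. I would then realize in $\A$ an array $\langle a^k_i\rangle_{k,i<\omega}$ with the (unique) quantifier-free type given by $a^k_i<^\A a^{k'}_{i'}$ iff $(k,i)\lx(k',i')$ and $E^\A(a^k_i,a^{k'}_{i'})$ iff $k=k'$. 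Each row $\langle Ia^k_i\rangle_i$ then shares the EM-type of $\langle Ia_i\rangle_i$, hence is $m$-inconsistent for $\phi$. For any $g:\omega\to\omega$, the path $\langle a^k_{g(k)}\rangle_k$ is a strictly $<^\A$-increasing tuple of pairwise $E^\A$-inequivalent elements, and by QE of $T_\ceq^*$ this quantifier-free type is unique; so $\langle Ia^k_{g(k)}\rangle_k$ shares its EM-type with the first column $\langle Ia^k_0\rangle_k$, which is a vertical indiscernible sequence and, by hypothesis, supports a consistent instance set of $\phi$. Thus $\langle Ia^k_i\rangle_{k,i<\omega}$ witnesses $m$-TP2 for $\phi$, contradicting NTP2.

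The principal obstacle is the $(2)\Rightarrow(1)$ direction: one must arrange the \EM-template so that the mutual indiscernibility of the array faithfully transports the $2$-inconsistency of rows and consistency of paths to the indiscernible picture $I$ via patterning. The $(1)\Rightarrow(2)$ direction is, by comparison, a careful bookkeeping exercise: the convex-equivalence-relation structure records exactly the TP2-array shape---$E^\A$-classes encode rows and the overall order encodes the row enumeration---and both ``row'' tuples and ``path'' tuples realize the unique quantifier-free type of their respective shape in $\A$.
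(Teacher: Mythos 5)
Your argument is correct and follows essentially the same approach as the paper in both directions: realize the TP2-array as a picture of $\A\models T_\ceq^*$ (rows $=$ $E^\A$-classes, transversals pairwise $E^\A$-unrelated), pass to an indiscernible picture via the Modeling Property of $(T_\ceq,\FF)$, and exploit the fact that a qf-indiscernible increasing sequence in $\A$ is either pairwise $E^\A$-related (horizontal) or pairwise $E^\A$-unrelated (vertical). The one extraneous step is in $(2)\Rightarrow(1)$: you do not need to first upgrade the TP2-array to be mutually indiscernible, and the claim that the \EM-template axioms ``follow from the mutual indiscernibility'' is a small misconception — for a pointwise map $\Gamma(\aa)=(b_{u(a_0)},\ldots,b_{u(a_{n-1})})$ the two \EM-template conditions hold trivially, and the transport of row-inconsistency and path-consistency to $I$ is already guaranteed because the patterning maps are embeddings of finite substructures of $\A$ into $\A$ and the relevant $P^\phi_n$/$\neg P^\phi_2$ statements hold for \emph{every} tuple of the corresponding quantifier-free type (this is exactly how the paper proceeds, via a compactness step producing the picture $J$ before applying the Modeling Property).
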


\begin{proof}[Proof of 1$\implies$2]
Let $I:\A\to\UU$ be an indiscernible picture of $\A\models T_\ceq^*$ and suppose $\phi(x;y) \in \L(\UU)$ is such that $\phi$ does not divide vertically across $I$.  Suppose $\phi$ divides across $I$ -- that is, we have an indiscernible sequence $\langle a_i \rangle_{i < \omega}$ in $\A$ such that $\left\{\phi(x;Ia_i)\right\}_{i < \omega}$ is inconsistent.  Without loss, suppose $a_i <^\A a_j$ for all $i < j < \omega$.  The only two possibilities for $\langle a_i \rangle_{i < \omega}$ are then
\begin{itemize}
\item $E^\A(a_i, a_j)$ for all $i < j < \omega$, or
\item $\neg E^\A(a_i, a_j)$ for all $i < j < \omega$.
\end{itemize}
Since $\phi$ does not divide vertically across $I$, the second case cannot hold.  Therefore, the first case holds.  However, by compactness, $\{ \phi(x;Ia_i) \}_{i < k}$ is inconsistent for some $k < \omega$.  Since $I$ is an indiscernible picture, for any $E^\A$-related $B_0 \subseteq A$ with $|B_0| = k$, $\{ \phi(x;Ib) \}_{b \in B_0}$ is inconsistent.  Therefore, for any $A' \subseteq A$ so that $(A'; \lhd)$ is isomorphic to $(\omega \times \omega, <_{\mathrm{lex}}, \sim_1)$ (where $(i_0,j_0) \sim_1 (i_1,j_1)$ iff $i_0 = i_1$), we have that $\langle Ia \rangle_{a \in A'}$ is a witness to the fact that $\phi(x;y)$ has $k$-TP2 in $T$.  Therefore, $T$ has TP2.  This completes the proof of 1$\implies$2.
\end{proof}

\begin{proof}[Proof of 2$\implies$1]
 Suppose $T$ has TP2.  
 Let $\varphi(x;y)$ witness 2-TP2 in $T$, and let $\A\models T_\ceq^*$ be the generic model.  Thus there are parameters $B := \{a^i_j : i, j < \omega \}$ in $\UU$ such that:
\begin{itemize}
\item For any finite partial function $f:\omega\to\omega$ with domain $\{i_0<\cdots<i_{n-1}\}$, 
$$\models P^\phi_n\left(b_{f(i_0)}^{i_0},...,b_{f(i_{n-1})}^{i_{n-1}}\right);$$
%\item For all $n \geq 1$, for all $(i_k, j_k \mid k < n)$ from $\omega$ where the $i_k$ are distinct: $\models P^\varphi_n(a^{i_0}_{j_0}, \ldots, a^{i_{n-1}}_{j_{n-1}})$
\item For all $i, j_0, j_1<\omega$, $\models \neg P^\varphi_2(b^i_{j_0}, b^i_{j_1})$.
\end{itemize}
Let $\L^+$ be the expansion of $\L$ (the language of $T$) with one new sort for $\A$ and a new function symbol $h:A\to\UU_y$, and let $\Gamma$ be the set of sentences of $\L^+(A)$ that includes the diagram $\diag(\A)$ of $\A$, the elementary diagram $e\diag(\M)$ of a model $\M\prec \UU$ containing $B$, and the sentences,
$$E^\A(a,a')\cond \neg P_2^\phi(h(a),h(a')),\,\,\bigwedge_{j<k<n}\neg E^\A(a_{j},a_{k})\cond P^\phi_n(h(a_{0}),...,h(a_{n-1}))$$
for all $a,a',a_0,...,a_{n-1}\in A$. Using $B$ to verify finite-satisfiability, it's clear that $\Gamma$ has a model, and from this model, we recover a picture $J:\A\to\UU$ (as a restriction of the interpretation of $h$) such that 
\begin{align}\label{array:thing1}
E^\A(a,a')\implies \neg P_2^\phi(J(a),J(a'))\\
\label{array:thing2}\bigwedge_{j<k<n}\neg E^\A(a_{j},a_{k})\implies P^\phi_n(J(a_{0}),...,J(a_{n-1}))
\end{align}
for all $a,a',a_0,...,a_{n-1}\in A$. Using $J$ to make an \EM-template $A^{<\omega}\to\UU^{<\omega}:\aa\mapsto J\aa$, and with the Modeling Property of $T_\ceq$, we obatin an indiscernible picture $I:\A\to\UU_y$ patterned on $\aa\mapsto J\aa$ -- so also satisfying statements (\ref{array:thing1},\ref{array:thing2}) with $I$ in place of $J$.
In particular,
\begin{align*}
\A \models a_0 < a_1 \wedge E(a_0,a_1) \ \Rightarrow \ \models \neg P^\varphi_2(I(a_0), I(a_1)) \\
\A \models a_0 < a_1 \wedge \neg E(a_0,a_1) \ \Rightarrow \ \models P^\varphi_2(I(a_0),I(a_1))
\end{align*}
Thus the indiscernible picture given by $I$ does not collapse to order.

Moreover, by compactness and \eqref{array:thing1}, for any set of pairwise $E^\A$-unrelated elements $B \subseteq A$, $\{ \varphi(x; Ib) \}_{b \in B}$ is consistent.  Thus, $\phi$ does not divide vertically across $I$.  On the other hand, for any set of pairwise $E^\A$-related elements $B \subseteq A$, $\{ \varphi(x; Ib) \}_{b \in B}$ is inconsistent.  Hence $\phi$ divides across $I$.
\end{proof}

Therefore, we get a dichotomy for theories with NTP2: Either $I$ collapses to order or there exists $\phi(x;y) \in \L(\UU)$ and an embedding $f : \A \to \A$ such that $\phi$ codes equivalence on $I \circ f$.  Hence $\phi$ does not divide vertically across $I \circ f$.  Then, the theorem says that $\phi$ does not divide across $I \circ f$.  In summary:

%%%%%-----------------------------------------------------------------------------------------------------%%%%%
\begin{cor}
Let $T$ be a theory with big model $\UU$. The following are equivalent:
\begin{enumerate}
\item $T$ is NTP2.
\item For every indiscernible $T_\ceq$-picture $I:\A\to\UU$, either $I$ is an indiscernible $T_1$-picture or, for every formula $\phi(x;y)\in\L(\UU)$ that does not divide vertically across $I$, $\phi(x;y)$ does not divide across $I$.
\end{enumerate}
\end{cor}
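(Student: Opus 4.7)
The plan is to deduce the corollary as a direct consequence of Theorem \ref{Thm_NTP2}, after observing that the construction used in the backward direction of that theorem already produces an indiscernible picture that fails to collapse to order. Thus I treat the corollary less as an independent result and more as a slight repackaging of the theorem plus one cheap observation borrowed from its proof.

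For (1) $\Rightarrow$ (2), I would fix an arbitrary indiscernible $T_\ceq$-picture $I:\A\to\UU$ and invoke Theorem \ref{Thm_NTP2} under the assumption that $T$ is NTP2. That theorem immediately gives, for every $\phi(x;y)\in\L(\UU)$, that if $\phi$ does not divide vertically across $I$, then $\phi$ does not divide across $I$. This is exactly the second disjunct of (2), so (2) holds for $I$ without any need to analyze whether $I$ is a $T_1$-picture; the extra disjunction in (2) is, so to speak, a free bonus.

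For (2) $\Rightarrow$ (1), I would prove the contrapositive by revisiting the construction in the (2) $\Rightarrow$ (1) half of Theorem \ref{Thm_NTP2}. Assuming $T$ has TP2, take a formula $\phi$ witnessing 2-TP2 and run the \EM-template construction exactly as in that proof, obtaining an indiscernible picture $I:\A\to\UU$ satisfying the two relations labeled \eqref{array:thing1} and \eqref{array:thing2}. These relations yield three parallel consequences: first, the implication
\[
a_0 <^\A a_1 \wedge E^\A(a_0,a_1)\;\Rightarrow\; \UU \models \neg P_2^\phi(Ia_0, Ia_1)
\]
together with its $\neg E^\A$ counterpart shows that $I$ does not collapse to order, so $I$ is not an indiscernible $T_1$-picture; second, \eqref{array:thing1} together with compactness shows that $\phi$ does not divide vertically across $I$; and third, \eqref{array:thing2} applied to a $<^\A$-increasing, pairwise $E^\A$-equivalent indiscernible sequence shows that $\phi$ does divide across $I$. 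Together these three facts refute both disjuncts of (2) for this particular $I$, so (2) fails, completing the contrapositive.

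The main obstacle is essentially nothing new, since Theorem \ref{Thm_NTP2} and Lemma \ref{Lemma_strictpat_NTP2} already do all of the heavy lifting. The only point of care is recognizing that the specific $I$ built in the backward direction of the theorem already witnesses non-collapse to order simultaneously with vertical non-dividing and dividing across --- an immediate feature of the way $I$ is constructed to track $P_2^\phi$ across $E^\A$-classes.
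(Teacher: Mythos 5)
Your proof is correct, and it reaches the same conclusion by essentially the same underlying machinery as the paper, though there is a small difference in how you handle the backward direction. The paper's intended route (made explicit in the Observation that follows the Corollary) is simply to note that the first disjunct of (2) is already a special case of the second disjunct: if $I$ collapses to order, then the type of $I\aa$ is constant on $<^\A$-increasing tuples $\aa$, so dividing vertically across $I$ and dividing across $I$ coincide, making the second disjunct automatic. Hence the Corollary's condition (2) is logically equivalent to the Theorem's condition (2), and the Corollary reduces immediately to the Theorem with no further work. You instead unwind the construction from the (2) $\Rightarrow$ (1) half of Theorem \ref{Thm_NTP2} and check that the specific indiscernible picture produced there refutes both disjuncts of the Corollary's (2); this is valid and achieves the same end, just with more re-derivation than strictly necessary. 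One cosmetic note: where you attribute vertical non-dividing to \eqref{array:thing1} and dividing across to \eqref{array:thing2}, the roles are reversed -- \eqref{array:thing2} (pairwise $E^\A$-unrelated $\Rightarrow$ $P^\phi_n$) gives vertical non-dividing, while \eqref{array:thing1} ($E^\A$-related $\Rightarrow$ $\neg P_2^\phi$) gives the $2$-inconsistency witnessing dividing across; the paper contains the same slip, so this does not affect the substance.
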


%%%%%-----------------------------------------------------------------------------------------------------%%%%%
\begin{obs} Note that the first condition in (2) above that $I$ be an indiscernible $T_1$-picture is a special case of the second condition, that for every formula $\varphi(x;y) \in \L(\UU)$ that does not divide vertically across $I$, $\varphi(x;y)$ does not divide across $I$.  This is because if $I$ collapses to order, then the type of $I(\aa)$ in $\UU$ is constant for $<^\A$-increasing tuples $\aa$ from $\A$, and so there is no distinction between dividing vertically across $I$ and dividing across $I$.

Also note that the first condition in (2) on its own would not be good enough to imply (1) from the Corollary; look at the case where $\UU$ is a big model of $T_\ceq^*$.  We can argue that the theory is NIP, because basic relations are NIP, and therefore the theory is NTP2.  However, it will not be the case that every indiscernible $T_\ceq^*$-picture is an indiscernible $T_1$-picture.  Consider the example where the identity $J : \A \rightarrow \A$ is used to make the \EM-template on which indiscernible $I : \A \rightarrow \UU$ is patterned.   Clearly $I$ will not be an indiscernible $T_1$-picture, as the equivalence relation is present. 
\end{obs}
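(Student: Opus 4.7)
The remark contains two independent claims: the subsumption of the ``$T_1$-picture'' alternative by the divide-vertically/divide-across implication, and an example showing that this alternative in isolation would be strictly stronger than NTP2. I will handle them in turn.

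For the first claim, suppose $I : \A \to \UU$ is an indiscernible $T_\ceq$-picture that collapses to order and that $\varphi(x;y) \in \L(\UU)$ does not divide vertically across $I$; the goal is that $\varphi$ does not divide across $I$. Suppose toward contradiction that it does, witnessed by an indiscernible sequence $\langle a_i \rangle_{i < \omega}$ in $\A$. Inspection of the possible quantifier-free $2$-types over $T_\ceq^*$ (and reversal of the index set if necessary) shows that $\langle a_i \rangle$ is $<^\A$-monotone, and since all of its pairs $(a_i, a_j)$ with $i < j$ realize the same $2$-type, they are either uniformly $E^\A$-related or uniformly $\neg E^\A$-related. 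The $\neg E^\A$ case is exactly vertical dividing, contradicting the hypothesis. In the $E^\A$ case, by the genericity of $\A \models T_\ceq^*$ (its model has densely many convex $E$-classes), pick a second indiscernible sequence $\langle a_i' \rangle_{i < \omega}$ that is $<^\A$-increasing with pairwise $\neg E^\A$-related entries. Both sequences have the same $T_1$-type in $\A$ (strictly $<^\A$-increasing), so collapse of $I$ to order gives $\tp^\UU(\langle I a_i \rangle) = \tp^\UU(\langle I a_i' \rangle)$; hence $\{\varphi(x; I a_i')\}_{i < \omega}$ is inconsistent too, which means $\varphi$ divides vertically across $I$ -- contradiction.

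For the second claim, take $T := T_\ceq^*$ itself, with $\UU$ a monster model. The atomic relations of $\L_\ceq$ are a linear order and a (convex) equivalence relation, both NIP; combined with the fact that $T_\ceq^*$ is an $\aleph_0$-categorical, model-complete, \Fraisse-like completion over the trivial fragment, this gives that $T_\ceq^*$ itself is NIP, hence NTP2, so (1) of the Corollary holds. Now pick any elementary embedding $I : \A \hookrightarrow \UU$ of the countable generic model; by $\aleph_0$-categoricity together with elementarity of $I$, it is an indiscernible $T_\ceq^*$-picture. But $I$ is \emph{not} an indiscernible $T_1$-picture: choose $a_0 <^\A a_1$ with $E^\A(a_0, a_1)$ and $b_0 <^\A b_1$ with $\neg E^\A(b_0, b_1)$; the two pairs share the same $T_1$-type in $\A$, yet $\models E(I a_0, I a_1) \wedge \neg E(I b_0, I b_1)$ in $\UU$. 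Equivalently, as suggested in the Observation, one may produce the same phenomenon via the Modeling Property applied to the \EM-template $\Gamma(\aa) = J\aa$ with $J = \mathrm{id} : \A \to \A$; patterning then forces the quantifier-free $\L_\ceq$-type of each $I\bb$ in $\UU$ to agree with that of $\bb$ in $\A$, again exhibiting $E$ in the image.

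The only genuinely non-routine step is the swap from an $E^\A$-related witness indiscernible sequence to an $E^\A$-unrelated one in the subsumption argument; this is precisely the content of ``$I$ collapses to order'' (which renders $<^\A$-increasing tuples $I$-indistinguishable regardless of their $E^\A$-structure), and everything else is bookkeeping with the definitions of vertical dividing and dividing across.
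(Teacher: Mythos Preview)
Your proposal is correct and follows essentially the same approach as the paper's inline justification; you have simply unpacked the two sentences of reasoning the paper gives into a fuller argument, and for the second claim you give a direct elementary-embedding construction of the non-collapsing picture (which works by quantifier elimination in $T_\ceq^*$) alongside the paper's \EM-template route.
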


One way to think of the (1) $\Rightarrow$ (2) direction of the above Corollary is that, in a TP2 theory, the specific formula witnessing non-collapse to order may not express a distinction between consistency and inconsistency for $\varphi$-types, for some $\varphi$, as that would create a witness to TP2.  Thus we have a partial notion of collapse: In TP2 theories and for indiscernible $T_\ceq$-pictures $I$, there is some restricted set of formulas $\theta(\xx) \in \L(\UU)$ such that if $<^\A$-increasing $\aa, \bb$ give $I(\aa), I(\bb)$ disagreeing on their complete type in $\UU$, it must be disagreement on some $\theta$ from this restricted set.
%%%%%--4/27/14

%%%%%%%%%%%%%%%%%%%%%%%%%%%%%%%%%%%%%%%%%%%%%
%%%% BEGIN ADDED BY VINCE - 27 MARCH 2014 %%%
%%%%%%%%%%%%%%%%%%%%%%%%%%%%%%%%%%%%%%%%%%%%%

\begin{bibdiv}
\begin{biblist}

\bib{Bodirsky}{article}{
    author = {Bodirsky, Manuel},
     title = {New {R}amsey classes from old},
   journal = {Electron. J. Combin.},
    volume = {21},
      year = {2014},
    number = {2},
     pages = {Paper 2.22, 13}
}

\bib{EalyOnshuus}{article}{
   author={Ealy, Clifton},
	 author={Onshuus, Alf},
	 title={Characterizing rosy theories},
	 journal={J. Symbolic Logic},
	 volume={72},
	 number={3},
	 date={2007},
	 pages={721-1080}
}

\bib{GuinHillOP}{article}{
   author={Guingona, Vincent},
   author={Hill, Cameron Donnay},
   title={On a common generalization of Shelah's 2-rank, dp-rank, and o-minimal dimension},
   journal={Ann. Pure Appl. Logic},
   note={to appear}
}

\bib{HillIndisc}{article}{
   author={Hill, Cameron Donnay},
   title={A survey of generalized indiscernibles as model-complete theories},
   note={Preprint},
   date={2012}
}

\bib{KPT}{article}{
   author={Kechris, A.S.},
	 author={Pestov, V.G.},
	 author={Todorcevic, S.},
	 title={Fra\"{i}ss\'{e} Limits, Ramsey Theory, and topological dynamics of automorphism groups},
	 journal={Geometric and Functional Analysis GAFA},
	 volume={15},
	 number={1},
	 pages={106-189}
}

\bib{KimKim}{article}{
   author={Kim, Byunghan},
   author={Kim, Hyeung-Joon},
	  title={Notions around tree property 1},
	journal={Annals of Pure and Applied Logic},
	 volume={162},
	   date={2011},
		pages={698-709}
}

\bib{KimKimScow}{article}{
   author={Kim, Byunghan},
   author={Kim, Hyeung-Joon},
   author={Scow, Lynn},
	  title={Tree indiscernibilities, revisited},
	journal={Archive for Math. Logic},
	 volume={53},
	   date={2014},
		pages={211-232}
}

\bib{Leeb}{article}{
    author = {Leeb, K.},
    title = {Vorlesungen uber {P}ascaltheorie},
 journal = {{L}ecture notes, {U}niversitat {E}rlangen},
    year = {1973},
   }

\bib{Malliaris}{article}{
   author={Malliaris, Maryanthe},
	 title={The characteristic sequence of a first-order formula},
	 journal={Journal of Symbolic Logic},
	 volume={75},
	 number={4},
	 year={2010},
	 pages={1415-1440}
}

\bib{ScowNIP}{article}{
   author={Scow, Lynn},
   title={Characterization of NIP theories by ordered graph-indiscernibles},
   journal={Annals of Pure and Applied Logic},
   volume={163},
   date={2012},
   pages={1624-1641}
}

\bib{Scow2}{article}{
 author={Scow, Lynn},
title={Indiscernibles, EM-type, and Ramsey classes of trees},
note={{p}reprint.}
}

\bib{Shelah93}{article}{
   author={Shelah, S.},
	  title={Simple unstable theories},
		journal={Annals of Mathematical Logic},
		volume={19},
		date={1980},
		pages={177-203}
}		

\bib{Shelah500}{article}{
   author={Shelah, S.},
	  title={Toward classifying unstable theories},
		journal={Annals Pure and Applied Logic},
		volume={80},
		date={1996},
		pages={229-255}
}

\bib{ShelahBook}{book}{
   author={Shelah, S.},
   title={Classification theory and the number of non-isomorphic models},
   publisher={North-Holland Publishing Company},
   date={1978}
}

\bib{Sokic}{article}{
  author = {Soki{\'c}, M.},
 title = {Ramsey property, ultrametric spaces, finite posets, and
              universal minimal flows},
   journal = {Israel J. Math.},
    volume = {194},
      year = {2013},
    number = {2},
     pages = {609--640}
}    

\bib{Sokic2}{article}{
    author =    {Soki{\'c}, M.},
    title = {Semilattices and {R}amsey property},
    note = {{p}reprint.}
}

\end{biblist}
\end{bibdiv}

%%%%%%%%%%%%%%%%%%%%%%%%%%%%%%%%%%%%%%%%%%%%%
%%%%% END ADDED BY VINCE - 27 MARCH 2014 %%%%
%%%%%%%%%%%%%%%%%%%%%%%%%%%%%%%%%%%%%%%%%%%%%

\end{document}